\newcolumntype{N}{c@{}S}
\pgfplotsset{compat=1.15}
\newtheorem{theorem}{Theorem}[section]
\newtheorem{lemma}[theorem]{Lemma}
\newtheorem{corollary}[theorem]{Corollary}
\newtheorem{proposition}[theorem]{Proposition}
\theoremstyle{definition}
\newtheorem{definition}[theorem]{Definition}
\theoremstyle{remark}
\newtheorem{remark}[theorem]{Remark}
\newcommand{\da}{\mathrm{da}} 
\newcommand{\dl}{\mathrm{dl}} 
\newcommand{\vG}{\varGamma}
\newcommand{\VV}{\mathcal{V}}
\newcommand{\Vo}{\mathring{\VV}}
\newcommand{\trans}{\prime}
\newcommand{\om}{\varOmega}
\renewcommand{\Omega}{\om}
\def\d{\partial}
\newcommand{\veps}{\varepsilon}
\newcommand{\Xm}[1]{\mathfrak{X}({#1})}
\renewcommand{\forall}{\text{ for all }}
\newcommand{\Sc}{\ensuremath{\mathcal{S}}}
\newcommand{\E}{\ensuremath{\mathscr{E}}}
\newcommand{\Eint}{\ensuremath{\mathring{\mathscr{E}}}}
\newcommand{\Ebnd}{\ensuremath{\mathscr{E}_\partial}}
\newcommand{\V}{\ensuremath{\mathscr{V}}}
\newcommand{\Vint}{\ensuremath{\mathring{\mathscr{V}}}}
\newcommand{\Vbnd}{\ensuremath{\mathscr{V}_{\partial}}}
\newcommand{\dhat}[1]{{\bar{{#1}}}}
\newcommand{\gex}{{\dhat{g}}}  
\newcommand{\gpar}{{g}}  
\newcommand{\nabex}{\dhat{\nabla}}  
\newcommand{\Rmex}{\dhat{\Riemann}}
\newcommand{\Kex}{\dhat{K}}
\newcommand{\gappr}{{g_h}}
\renewcommand{\div}{\mathrm{div}}
\DeclareMathOperator{\curl}{\mathrm{curl}}
\DeclareMathOperator{\rot}{\mathrm{rot}}
\DeclareMathOperator{\inc}{\mathrm{inc}}
\newcommand{\W}{\Lambda}
\newcommand{\TT}{\mathcal{T}}
\newcommand{\mt}[1]{[{#1}]}
\newcommand{\jmp}[1]{\ensuremath{\llbracket #1 \rrbracket}}
\newcommand{\vol}[2][]{\omega_{{#2}}(\ifthenelse{\equal{#1}{}}{g}{#1})}
\newcommand{\vo}[1]{\omega_{{#1}}}
\newcommand{\volform}{\omega}
\newcommand{\volformh}{\omega_h}
\newcommand{\volformex}{\dhat{\omega}}
\newcommand{\voex}[1]{\volformex_{{#1}}}
\newcommand{\Kog}{\widetilde{\Gauss\omega}}
\newcommand{\dincop}[1][]{\widetilde{\inc_{{\ifthenelse{\equal{#1}{}}{g}{#1}}}}}
\newcommand{\drotrotop}[1][]{\widetilde{\rot\rot_{{\ifthenelse{\equal{#1}{}}{g}{#1}}}}}
\newcommand{\mat}[1]{\ensuremath{\begin{pmatrix}#1\end{pmatrix}}}	
\newcommand{\R}{\ensuremath{\mathbb{R}}}
\newcommand{\T}{\ensuremath{\mathscr{T}}}
\newcommand{\Ltwo}[1][]{\ensuremath{L^2\ifthenelse{\equal{#1}{}}{}{(#1)}}}
\newcommand{\Linf}[1][]{\ensuremath{L^{\infty}\ifthenelse{\equal{#1}{}}{}{(#1)}}}
\newcommand{\Lp}[1][]{\ensuremath{L^{p}\ifthenelse{\equal{#1}{}}{}{(#1)}}}
\newcommand{\Wsp}[1][]{\ensuremath{W^{s,p}\ifthenelse{\equal{#1}{}}{}{(#1)}}}
\newcommand{\Wsph}[1][]{\ensuremath{W_h^{s,p}\ifthenelse{\equal{#1}{}}{}{(#1)}}}
\newcommand{\Hone}[1][]{\ensuremath{H^1\ifthenelse{\equal{#1}{}}{}{(#1)}}}
\newcommand{\Honeh}[1][]{\ensuremath{H^1_h\ifthenelse{\equal{#1}{}}{}{(#1)}}}
\newcommand{\Honez}[1][]{\ensuremath{H^1_0\ifthenelse{\equal{#1}{}}{}{(#1)}}}
\newcommand{\Hmone}[1][]{\ensuremath{H^{-1}\ifthenelse{\equal{#1}{}}{}{(#1)}}}
\newcommand{\Hsh}[1][]{\ensuremath{H^s_h\ifthenelse{\equal{#1}{}}{}{(#1)}}}
\newcommand{\Htwo}[1][]{\ensuremath{H^2\ifthenelse{\equal{#1}{}}{}{(#1)}}}
\newcommand{\Htwoz}[1][]{\ensuremath{H^2_0\ifthenelse{\equal{#1}{}}{}{(#1)}}}
\newcommand{\Hmtwo}[1][]{\ensuremath{H^{-2}\ifthenelse{\equal{#1}{}}{}{(#1)}}}
\newcommand{\Riemann}{\mathcal{R}}
\newcommand{\Gauss}{K}
\newcommand{\Gaussh}{\Gauss_h}
\newcommand{\GeodCurv}{\kappa}
\newcommand{\nv}{\nu}
\newcommand{\tv}{{{\tau}}}       
\newcommand{\gn}{{{\hat{\nu}}}} 
\newcommand{\gt}{{{\hat{\tau}}}}
\newcommand{\Regge}[1][]{\ensuremath{\mathrm{Reg}\ifthenelse{\equal{#1}{}}{}{(#1)}}}
\newcommand{\idop}{\mathrm{id}}
\newcommand{\pder}[2]{\ensuremath{\partial_{#2}{#1}}}
\newcommand{\tr}[2][]{\ensuremath{\,\mathrm{tr}\ifthenelse{\equal{#1}{}}{}{_{#1}}(#2)}}
\newcommand{\Pol}{\ensuremath{\mathcal{P}}}
\newcommand{\RegInt}[1][]{\mathcal{I}_{h}^{\Regge\ifthenelse{\equal{#1}{}}{}{,#1}}}
\newcommand{\LtwoInt}[1][]{\Pi_{\Ltwo}^{\ifthenelse{\equal{#1}{}}{}{#1}}}
\newcommand{\LtwoProj}[1][]{P_{\Ltwo}^{\ifthenelse{\equal{#1}{}}{}{#1}}}
\newcommand{\Eucl}{\delta}
\newcommand{\nrm}[1]{|\!|\!|{{#1}}|\!|\!|}
\title[Improved convergence of distributional Gauss curvature]{On the improved convergence of lifted distributional Gauss curvature from Regge elements}
\author[J.~Gopalakrishnan]{Jay~Gopalakrishnan}
\address{Portland State University, PO Box 751, Portland OR 97201, USA}
\email{gjay@pdx.edu}
\author[M.~Neunteufel]{Michael~Neunteufel}
\address{Portland State University, PO Box 751, Portland OR 97201, USA}
\email{mneunteu@pdx.edu}
\author[J.~Sch\"oberl]{Joachim~Sch\"oberl}
\address{Institute of Analysis and Scientific Computing, TU Wien, Wiedner Hauptstr. 8-10, 1040 Wien, Austria}
\email{joachim.schoeberl@tuwien.ac.at}
\author[M.~Wardetzky]{Max~Wardetzky}
\address{Institute of Numerical and Applied Mathematics, University of G\"ottingen, Lotzestr. 16-18, 37083 G\"ottingen, Germany}
\email{email: wardetzky@math.uni-goettingen.de}
\begin{document}

\begin{abstract}
	Although Regge finite element functions are not continuous, useful
	generalizations of nonlinear derivatives like the
	curvature, can be defined using them. This paper is devoted to
	studying the convergence of the finite element lifting of a
	generalized (distributional) Gauss curvature defined using a metric
        tensor approximation in
	the Regge finite element space.  Specifically, we investigate the
	interplay between the polynomial degree of the curvature lifting by
	Lagrange elements and the degree of the metric
	tensor  in the Regge finite element space.
	Previously, a superconvergence result, where convergence
	rate of  one order higher than expected, was obtained when the
        approximate metric
	is the canonical Regge interpolant of the exact metric. In this
	work, we show that an even higher order can
	be obtained if the degree of the curvature lifting
	is reduced by one polynomial degree and if at least linear Regge elements are used. These improved convergence rates
	are confirmed by numerical examples.\\
  \vspace*{0.05cm}
  \\
  {\bf{Keywords:}} Gauss curvature, finite element method, Regge calculus, differential geometry.  \\
	
	\noindent
	\textbf{{MSC2020:}} 65N30, 53A70, 83C27.
\end{abstract}

\maketitle

\section{Introduction}
\label{sec:intro}

Substantial progress has recently been made on computing high-order
approximations of Gauss curvature on two-dimensional Riemannian manifolds using non-smooth metrics that are
piecewise smooth with respect to a mesh~\cite{GN2023, BKG21, GNSW2023}.
Perhaps the most well-known example is that of a piecewise constant
metric, where the angle defect at mesh vertices yields an approximation of Gauss curvature.  Being concentrated at vertices, this angle defect
can naturally be lifted into a linear Lagrange finite element space
that has one basis function per vertex. Taking this as a point of departure by viewing a piecewise constant metric as a polynomial of degree $k=0$ and the resulting Lagrange curvature lifting as a polynomial of degree $k+1 =1$ within each element,
we can generalize to higher degrees $k$.
Namely, in~\cite{GNSW2023}, we showed that if
a smooth metric is approximated using the canonical interpolant of the
Regge finite element space of degree $k$, then the error in a degree
$(k+1)$-Lagrange finite element approximation of the Gauss curvature
converges to zero in the $H^{-1}$-norm at the rate $\mathcal O (h^{k+1})$, where
$h$ is the mesh-size. The present work is devoted to answering the following related question. What convergence rates can be expected if we decide to approximate curvature in an
even higher degree Lagrange space---or for that matter, a lower degree
Lagrange space---while keeping the metric in the degree $k$ Regge
space? Since the analysis in~\cite{GNSW2023} used delicate
orthogonality properties (such as the orthogonality of the error in
Christoffel symbol approximations), the answer is not obvious.
In fact, the answer we provide in this paper may even
seem counterintuitive at first sight: reducing the degree of curvature
approximation to degree $k$ increases the convergence rate, while increasing
it to degree $k+2$ reduces the rate.
Specifically, we observe that,
under suitable assumptions, the following rates apply:
\begin{center}
  \begin{tabular}{|c|c|l|}
    \hline 
    Curvature approximation & $H^{-1}$ convergence & Source \\
    \hline
    $k+2$\qquad $k\ge 0$  & $\mathcal{O}(h^{k})$ & Section~\ref{sec:num_examples} in this paper
    \\
    $k+1$\qquad $k\ge 0$ & $\mathcal{O}(h^{k+1})$ & \cite[Theorem~6.5]{GNSW2023}
    \\
    $k\textcolor{white}{+0}$ \qquad $k\ge 1$ & $\mathcal{O}(h^{k+2})$ & Theorem~\ref{thm:improved_rate_Hm1}, \ref{thm:improved_rate_Hm1_pure} in this paper
    \\
    $k-1$\qquad $k\ge 2$ & $\mathcal{O}(h^{k+1})$ & Section~\ref{sec:num_examples} in this paper
    \\
    \hline 
  \end{tabular}
\end{center}
The remainder of this introduction places these and related prior
results into
perspective.

By Gauss' Theorema Egregium, Gauss curvature $\Gauss$ is an intrinsic quantity.
It can be computed considering solely the metric tensor of the manifold, without reference to any embedding. Therefore, it is natural to ask for discrete versions of Gauss curvature that arise when only approximations of the exact metric are given, and how well such discrete versions approximate the exact curvature when the approximated metrics are close to the exact one.

We consider Regge finite elements for discretizing the metric tensor. They originate from Regge calculus, originally developed for solving Einstein field equations in general relativity~\cite{Regge61}. Following Regge we consider a simplicial triangulation of the manifold and assign positive numbers to each edge. These numbers are interpreted as squared lengths and determine a piecewise constant metric tensor. Sorkin pointed out in \cite[Section II.A]{Sorkin75} that this piecewise constant metric tensor possesses tangential-tangential continuity, or $tt$-continuity  (which we define precisely in Subsection~\ref{ssec:regge-metric} below)  over element interfaces. Christiansen~\cite{Christiansen04} popularized 
Regge calculus in the study of  finite element methods (FEM) much the same way as 
{finite element exterior calculus} (FEEC) popularized the use of 
Whitney forms \cite{whitney57} in FEM.
He defined in \cite{Christiansen11} the lowest-order Regge finite element space (the $k=0$ case of the space
$\Regge_h^k$ defined in \eqref{eq:ReggeFE} below)
and showed that the linearization of the discretized Einstein--Hilbert action functional around the Euclidean metric equals the distributional incompatibility operator applied to such functions. Further, he proved in \cite{Christiansen13,Chr2024} that the densitized curvature of a sequence of mollified piecewise constant Regge metrics converges to the angle defect in the sense of measures. Li extended the Regge space  to arbitrary polynomial degrees $k$ and to higher-dimensional simplices \cite{li18}, and Neunteufel defined high-order Regge elements for quadrilaterals, hexahedra, and prisms \cite{Neun21}.

Due to the non-smoothness of the approximated metric $\gappr\in\Regge_h^k$ (which only has tangential-tangential continuity) and the nonlinearity of curvature, a  definition of consistent and convergent notion of discrete Gauss curvature is not obvious.
We refer to Sullivan \cite[Section 4.1]{Sullivan08} for a historical discussion and to Strichartz \cite[Corollary 3.1]{Str2020} for a definition of curvature as a measure on singular surfaces, where the curvature quantities, being multiplied by the corresponding volume forms,  are handled as densities. In  \cite{BKG21}, Berchenko-Kogan and Gawlik defined  a distributional version of  the densitized Gauss curvature, namely $\Gauss\volform$, a generalization of the product of Gauss curvature $K$ with the volume form~$\omega$
(and we analyzed their  curvature generalization further in~\cite{GNSW2023}).
In their work, in addition to the elementwise Gauss curvature $\Gauss|_T=\Gauss(\gappr)|_T$, they consider the jump of the geodesic curvature $\GeodCurv$ over edges and the angle defect at vertices as sources of Gauss curvature, i.e., for any $u$ in a space $\Vo(\T)$ based on a mesh $\T$ defined below, 
\begin{align}
	\label{eq:distr_Gauss}
\Kog(\gappr)(u) = \sum_{T\in \T}\int_T \Gauss|_T\,u\,\vo{T}+\sum_{E\in\E}\int_E\jmp{\GeodCurv}_E\,u\,\vo{E}+\sum_{V\in\V}\Theta_V\,u(V),
\end{align}
where $\vo{T}$ and $\vo{E}$ denote the volume forms of the respective (sub-)domains.
This allows for putting the well-established Gauss curvature approximation 
by angle deficit $\Theta_V$ ($2\pi$ minus the sum of the interior angles of triangles attached to the vertex) into a finite element context and to extend it to higher polynomial order. In fact, considering piecewise constant metrics, $\gappr\in\Regge_h^0$, the angle deficit is recovered, since then $\Kog(\gappr)(u) =\sum_{V\in\V}\Theta_V\,u(V)$.
The distributional Gauss curvature \eqref{eq:distr_Gauss} acts on piecewise smooth and globally continuous 0-forms defined by 
\begin{equation}	\label{eq:def_scalar_testfunc}
  \begin{gathered}
    \VV(\T) = \{ u\in \W^0(\T)\,:\, u \text{ is continuous}\},
    \\
    \Vo_{\Gamma}(\T) = \{ u\in \VV(\T)\,:\,\, u|_{\Gamma}=0\},\quad \Vo(\T)=\Vo_{\d\Omega}(\T).    
  \end{gathered}
\end{equation}
The meaning  of  ``piecewise smooth'' with respect to a ``mesh'' $\T$ and  
definition of  piecewise smooth $k$-form fields $\W^k(\T)$ appear in Subsection~\ref{ssec:regge-metric} below.
The standard degree~$k$ Lagrange finite element subspaces of
of the spaces in~\eqref{eq:def_scalar_testfunc} are denoted by
$\VV_h^k$, $\VV_{h,\Gamma}^k$, and $\Vo_h^k$, respectively,
Berchenko-Kogan and Gawlik proved~\cite{BKG21} error estimates in the $\Hmone$-norm by using an integral representation of \eqref{eq:distr_Gauss}. Indeed, let $\Eucl$ denote the Euclidean metric, whose coordinate components coincide with the classical Kronecker delta, $\mt{\delta}_{ij} = \delta_{ij}$ (not to be confused with the Dirac delta, which is never used in this work). Then there holds
\begin{align}
	\label{eq:int_rep}
	\Kog(g)(u) = \frac{1}{2}\int_0^1b(\Eucl+t(g-\Eucl);g-\Eucl,u)\,dt,
\end{align}
where the bilinear form $b(g;\sigma,u)$ is the covariant version of the Hellan--Herrmann--Johnson (HHJ) method \cite{Hel67,Her67,Joh73} extending the covariant differential operator $\div_g\div_g(\mathbb{S}_g\sigma)$ in the sense of distributions, where $\mathbb{S}_g\sigma=\sigma - \tr[g]{\sigma}g$. Recently, Gawlik and Neunteufel extended the analysis to the $\Hmtwo$-norm for the Gauss curvature, see \cite{GN2023}. Further, they considered an integral representation for the error $\big(\Kog(\gappr)-\widetilde{\Gauss\volform}(g)\big)(u)$.

It is often useful (or even necessary) to consider Gauss curvature as a function instead of as a functional or a distribution.
In \cite{Gaw20}, Gawlik computed a discrete Riesz representative $\Gaussh\in\Vo_h^r$ in the Lagrange finite element space $\Vo_h^r$ as a lifting of the distributional Gauss curvature \eqref{eq:distr_Gauss} via
\begin{align*}
	\int_{\Omega}\Gaussh\,u_h\,\volformh = \Kog(\gappr)(u_h) \qquad \forall u_h\in \Vo_h^r,
\end{align*}
where $\volformh=\sqrt{\det \gappr}\,dx^1\wedge dx^2$ denotes the approximated volume form.

He proved error estimates of this lifting also for Sobolev norms under the assumption that $\gappr\in\Regge_h^k$ is an optimal-order approximation of the exact metric $\gex$, with exact Gauss curvature $\Kex=\Gauss(\gex)$,
\begin{align}
	\label{eq:conv_rate_Gawlik}
	\|\Gaussh-\Kex\|_{H^l_h}\leq C\,h^{-l+\min\{k-1,r+1\}}\big(|\gex|_{H^{k+1}}+|\Kex|_{H^{r+1}}\big),\quad -1\leq l\leq r.
\end{align}
Here, $\|\cdot\|_{H_h^l}$ denotes the elementwise $H^l$-norm. In \cite{GNSW2023}, we considered an alternative integral representation that relies on the distributional covariant \emph{incompatibility} operator, $\inc_g=\curl_g\curl_g$, which is related to the HHJ method by $\inc_g\sigma=-\div_g\div_g(\mathbb{S}_g\sigma)$. We showed an  convergence rate increase (by one order) compared to \eqref{eq:conv_rate_Gawlik} if $\gappr$ is the canonical Regge interpolant (defined by Li in \cite{li18}, reproduced in \eqref{eq:RegInt} below) of the exact metric $\gex$ and $\Gaussh$ is assumed to be in $\Vo_h^{k+1}$,
\begin{align*}
	\|\Gaussh-\Kex\|_{H^l_h}\leq C\,h^{-l+k}\big(|\gex|_{W^{k+1,\infty}}+|\Kex|_{H^{k}}\big),\qquad -1\leq l\leq k.
\end{align*}
However, this convergence rate, when compared against  the best approximation capabilities  of the space, is not the theoretical optimum; for $\Gaussh\in \Vo_h^{k+1}$ we obtain only $\Ltwo$-convergence of order $k$ instead of $k+2$.

In this paper we show that an increased, optimal convergence rate for the lifting of the distributional Gauss curvature $\Gaussh$ and its densitized version $\Gaussh\volformh$ is obtained when considering Lagrange elements $\Vo_h^k$ of one polynomial degree less assuming at least linear elements, $k\geq 1$, are used. Our  analysis relies heavily on the properties of the canonical Regge interpolant \cite{li18} preserving specific moments at edges and elements. Therefore, the results only hold for the canonical Regge interpolant.
For more general metric approximations in  $\Regge_h^k$, 
the estimate  \eqref{eq:conv_rate_Gawlik} cannot generally be improved.
The technique of analysis in this work differs from our earlier work~\cite{GNSW2023}  in that we use an integral representation directly for the difference between the curvatures of the exact and the approximated metrics,  instead of employing an interpolation from the Euclidean metric as in \eqref{eq:int_rep}. This allows us to bypass a delicate ``Christoffel orthogonality property,'' which was a key step in our prior work (see~\cite[Lemma~6.10]{GNSW2023}). As in~\cite{GNSW2023}, our current analysis
  also  relies on the distributional covariant incompatibility operator
  $\dincop$, but now we rely specifically on its distributional $\Ltwo$-like adjoint $\drotrotop$. The latter  simplifies the curvature error analysis compared to \cite{GNSW2023} (even if it does not  provide estimates for  $\dincop$-approximation, which we did in \cite{GNSW2023}).

This paper is structured as follows. In the next section we quickly review   differential geometry notions we use, distributional covariant derivatives, and the distributional Gauss curvature. Section~\ref{sec:error_analysis} is devoted to the error analysis of the lifted (densitized) Gauss curvature in the $H^{-1}$- and stronger Sobolev norms.  In Section~\ref{sec:num_examples} we present numerical examples confirming the proved convergence rates.

\section{Notation}
\label{sec:notation}

Let $\Omega\subset\R^2$ be an open domain with a smooth metric tensor $\gex$ providing a Riemannian manifold structure $(\Omega,\gex)$. Consider a triangulation $\T$ of $\Omega$ consisting of possibly curved triangles. Denote the set of all edges and vertices by $\E$ and $\V$, respectively. We split $\E$ into edges lying on the boundary $\partial\Omega$, given by $\Ebnd$, and inner ones $\Eint=\E\setminus\Ebnd$. Analogously we define $\Vbnd$ and $\Vint$. We assume to be given an approximation of $\gex$, denoted $\gappr$, defined on the triangulation $\T$. The subscript $h$ indicates that $\gappr$ is defined with respect to the triangulation $\T$, where $h$ can be related to the maximal element size. All quantities computed from the exact metric $\gex$ will be marked by an overline ``\;$\dhat{\cdot}$\;'' throughout the paper.

\subsection{Regge metric}
\label{ssec:regge-metric}
Let $\Xm T$, $\W^k (T)$, and $\TT_l^k (T)$ denote the set
of smooth vector fields, $k$-form fields, and $(k, l)$-tensor fields
on a submanifold $T$ of $\om$, respectively. Here, smoothness signifies infinite
differentiability at interior points and continuous differentiability up to (including) the boundary.
In such symbols, replacement of the manifold $T$ by a collection of
subdomains such as the triangulation $\T$, yields the piecewise
smooth analogue with respect to the collection. For example, $\TT_l^k(\T)$
is the Cartesian product of $ \TT_l^k(T) $ over an enumeration of
all $T \in \T$. Analogously, $\W^1 (\T) = \TT_0^1(\T)$ and
$\Xm \T = \TT_1^0(\T)$. Let
$ \Sc(\T) = \{\sigma \in \TT_0^2(\T): \sigma(X, Y) = \sigma(Y, X)$ for
$X, Y \in \Xm \T \}$. Functions in $\Sc(\T)$ are symmetric covariant
2-tensors on $\om$ with no continuity over element interfaces in general. 
We define $\Sc^+(\T)$ as the subset of positive definite symmetric 2-tensors.
For coordinate computations, we use coordinates $x^1, x^2$ and Einstein's summation convention of repeated indices. Let the accompanying
coordinate frame and coframe be denoted by $\d_i$ and $dx^i$.
We assume that these coordinates preserve orientation, i.e., the orientation of $\om$ is given by the ordering $(\d_1, \d_2)$. We use standard operations on 2-manifold spaces such as the exterior derivative $d: \W^k(\om) \to \W^{k+1}(\om)$ (see e.g. \cite{Lee97, Peter16, Tu17}).

Every $E \in \Eint$ is of the form $E = \d T_+ \cap \d T_-$ for two elements
$T_\pm \in \T$.  We say that a $\sigma \in \Sc(\T)$ has
``tangential-tangential continuity'' or {\em ``$tt$-continuity''} if
$ \sigma|_{T_+}(X, Y) = \sigma|_{T_-}(X, Y)$ for all tangential vector fields $X, Y \in \Xm E$
for every $E$ in $\Eint$ (i.e., $\sigma(X, Y)$ is single-valued on all
$E \in \Eint$). This leads  to the definition of the (infinite-dimensional) {\em Regge space}
\begin{align}  \label{eq:ttspace}
	\Regge(\T)
	& =
	\{
	\sigma \in \Sc(\T): \sigma \text{ is $tt$-continuous}
	\}
\end{align}
and its  subset of \emph{Regge metrics}
\begin{align*}
	\Regge^+(\T) = \{ \sigma \in \Regge(\T): \sigma(X, X)>0 \forall 0\neq X \in \Xm \T\}.
\end{align*}
The approximate metric $\gappr$ is assumed to be in
$\Regge^+(\T)$. 

\subsection{Differential geometry}
\label{subsec:diffgeo}

Let the  unique Levi--Civita connection generated by $\gex$ be denoted by~$\nabex$. Note that it is standard to extend the Levi--Civita connection $\nabex$ from vector fields to tensor fields (see e.g., \cite[Lemma~4.6]{Lee97}) so that the Leibniz rule holds.

Following the sign convention of \cite{Lee97}, recall that the {\em Riemann curvature tensor} $\Rmex \in \TT^4_0(\om)$ of the manifold is defined by
\begin{align*}
	\Rmex(X, Y, Z, W) =
	\gex (\nabex_X \nabex_Y Z - \nabex_Y \nabex_X Z - \nabex_{[X,Y]}Z, W), \quad X, Y, Z, W \in \Xm \om.
\end{align*}
Recall that the {\em Gauss curvature} of $\om$ is given by 
\begin{align*}
	\Kex:=\Gauss(\gex) = 
	\frac{\Rmex(X, Y, Y, X)}{ \gex(X, X) \gex(Y, Y) - \gex(X, Y)^2},  
\end{align*}
where  $X$ and $Y$ are some linearly independent vector fields and the value $\Kex$ is independent of their choice.

We will also require the {\em geodesic curvature} along a curve $\varGamma$ in the
manifold $(\om, \gex)$. Let $\gt$ denote the $\gex$-normalized tangent vector of $\varGamma$ and $\gn$ the
$\gex$-orthonormal vector such that $(\gt,\gn)$ builds a right-handed coordinate system.  Then
\begin{align*}
	\dhat{\GeodCurv}:=\GeodCurv(\gex) = \gex ( \nabex_{\gt} \gt, \gn) = - \gex(\nabex_{\gt}\gn,\gt)
\end{align*}
gives the signed geodesic curvature of $\varGamma$. The element volume 2-form $\volformex$ and edge volume 1-form $\voex{E}$, $E\in\E$, read in coordinates
\begin{align}
	\label{eq:coo_volforms}
	\volformex = \sqrt{\det \gex}\,dx^1\wedge dx^2,\qquad \voex{E}=\sqrt{\gex(\tv,\tv)}\,d\tv,
\end{align}
where $\tv\in\Xm{E}$ denotes the Euclidean normalized tangent vector at edge $E$ and $d\tv$ is the associated 1-form. We use also the abbreviation of e.g. $\gex_{\tv\tv}:= \gex(\tv,\tv)$.

\subsection{Finite element spaces}
Let $\hat{T}\subset\R^2$ denote the reference triangle and define $\Pol^k(\hat{T})$ as the set of polynomials of degree up to $k$ on $\hat{T}$. For $T\in\T$ let $\Phi_T:\hat{T}\to T\in\Pol^k(\hat{T},\R^2)$ denote the diffeomorphic mapping from the reference to the physical element.

We define the Regge finite element space as a subspace of $\Regge(\T)$ \eqref{eq:ttspace} by
\begin{equation}
	\label{eq:ReggeFE}  
	\begin{aligned}
		\Regge_h^k =  \{ \sigma \in \Regge(\T):
		&
		\text{ for all }  T \in \T,\;
		\sigma|_T = \sigma_{ij} dx^i \otimes dx^j \text{ with } \sigma_{ij}\circ\Phi_T \in \Pol^k(\hat{T} ) \}.
	\end{aligned}
\end{equation}
Further, the Lagrange finite element space as a subspace of $\VV(\T)$ \eqref{eq:def_scalar_testfunc} is given by
\begin{align*}
	\begin{gathered}
		\VV_h^k = \{ u \in \VV(\T):  \text{ for all } T \in \T,
		\; u|_T\circ\Phi_T  \in \Pol^k(\hat T) \},
                \\
                \Vo_{h, \vG}^k = \{ u \in \VV_h^k: u|_{\vG} = 0 \}, \quad\text{ and } \quad
		\Vo_h^k = \Vo_{h, \d \Omega}^k.
	\end{gathered}
\end{align*}

\subsection{Lifted distributional Gauss curvature}
For the reader's convenience we derive the (lifted) distributional Gauss curvature following \cite{BKG21,GNSW2023}. Since a $\gappr\in\Regge^+(\T)$ is smooth within each element $T\in\T$ we can compute elementwise its Gauss curvature $\Gauss(\gappr)|_T$. It is only one contributor of the total distributional Gauss curvature as the jumps of $\gappr$ generate additional sources of curvature. Let for an edge $E\in\Eint$ the unique $\gappr$-normal vector that points inward to elements $T_{\pm}\in\T$, such that $E=\d T_+\cap \d T_-$, be denoted by $\gn^{T_{\pm}}_E$. As $\gappr$ is only $tt$-continuous, $\gn^{T_{+}}_E\neq -\gn^{T_{-}}_E$ in general. Thus, the jump of the geodesic curvature 
\begin{align*}
\jmp{\GeodCurv}_E= \GeodCurv_{\gn^{T_{+}}_E}+\GeodCurv_{\gn^{T_{-}}_E}
\end{align*} 
acts as a source of curvature at edges. If there is no chance of confusion we neglect the subscript and only write $\jmp{\cdot}$ for the jump over edges. 

Let $V\in\Vint$ be an interior vertex and $T\in\T$ a triangle containing $V$. Then there are two edges $E_{\pm}\in\Eint\cap\d T$ such that $V=\d E_+\cap \d E_-$. Denote $\gt_V^{E_\pm}$ the $\gappr$-normalized tangent vectors starting at $V$ and pointing into $E_{\pm}$. We define the following angle function on $V$
\begin{align*}
	\sphericalangle^T_V=\arccos(\gappr|_T(\gt_V^{E_+},\gt_V^{E_-}))
\end{align*}
and the angle deficit at vertex $V\in\Vint$
\begin{align}
	\label{eq:angle_deficit}
	\Theta_V=2\pi - \sum_{T\in \T_V}\sphericalangle^T_V,\qquad \T_V=\{T\in\T\,:\, V\in T\}.
\end{align}
This function acts as a source of curvature on vertices. Note that for the smooth metric $\gex$ there holds $\Theta_V=0$.

\begin{definition}
	\label{def:distr_Gauss}
	Let $\gappr\in\Regge^+(\T)$ be a Regge metric. The \emph{distributional densitized Gauss curvature} $\Kog(\gappr):\Vo(\T)\to\R$ is defined for all $u\in\Vo(\T)$
	\begin{align}
		\label{eq:def_distr_Gauss}
		\Kog(\gappr)(u) = \sum_{T\in \T}\int_T \Gauss|_T\,u\,\vo{T}+\sum_{E\in\Eint}\int_E\jmp{\GeodCurv}\,u\,\vo{E}+\sum_{V\in\Vint}\Theta_V\,u(V),
	\end{align}
where $\Gauss|_T$, $\GeodCurv$, $\vo{T}$, $\vo{E}$, and $\Theta_V$ are evaluated with respect to $\gappr$.
\end{definition}

\begin{remark}
  Note, that this generalizes  the \emph{densitized} Gauss curvature  $\Gauss\volform$  (see e.g., \cite{Christiansen13}), not solely  $\Gauss$. One can interpret \eqref{eq:def_distr_Gauss} as a measure with support on triangles, edges, and vertices, cf. \cite{Str2020}. See Remark~\ref{rem:pureK} for more on  approximating just~$K$.
\end{remark}

We consider a discrete Riesz representative of the functional in~\eqref{eq:def_distr_Gauss}, following Gawlik \cite{Gaw20}. We also incorporate essential and natural Dirichlet $\Gamma_D$ and Neumann $\Gamma_N$ boundary conditions, as discussed in \cite{GNSW2023}. To this end, extend the definition of the angle deficit \eqref{eq:angle_deficit} and the jump of the geodesic curvature to boundary vertices and edges in the obvious manner: we define $\jmp{\GeodCurv}_E=\GeodCurv$ for boundary edges $E\in\Ebnd$ and $\Theta_V$ for $V\in\Vbnd$ as in \eqref{eq:angle_deficit}. Note that $\jmp{\GeodCurv}_E$ and $\Theta_V$ do not vanish for smooth metrics $g$ at $E\in\Ebnd$ and $V\in\Vbnd$ in general. Instead, they are used to incorporate natural Neumann boundary conditions. We assume that on the Dirichlet boundary, the Gauss curvature $\Kex^D=\Kex|_{\Gamma_D}$ is prescribed. The Neumann boundary data is given by the functional $\widetilde{\kappa^N}:\VV(\T)\to\R$
\begin{align*}
	\widetilde{\kappa^N}(u) = \int_{\Gamma_N}\dhat{\GeodCurv}\,u\,\volformex_{\Gamma_N} + \sum_{V\in\V\cap \Gamma_N}\dhat{\sphericalangle}_V^N\, u(V),
\end{align*}
where $\dhat{\sphericalangle}_V^N$ denotes the exterior angle, which is $2\pi$ minus the interior angle, measured with respect to $\gex$ by the edges of $\Gamma_N$ at $V$.

\begin{definition}
	\label{def:lifted_distr_Gauss}
	Let $\gappr \in \Regge^+(\T)$, $k \ge 1$ be an integer, and assume that the Dirichlet data $\Kex^D$ is the trace of a Lagrange finite element function in $\VV_h^k$. The finite element curvature approximation $\Gaussh:=\Gaussh(\gappr)$ of degree $k$ is the unique function in $\VV_{h}^{k}$ determined by requiring that $\Gaussh|_{\Gamma_D} = \Kex^D$ on $\Gamma_D$ and for all $u_h\in \Vo_{h,\Gamma_D}^{k}$,
\begin{align} 
	\label{eq:lifted_distr_Gauss} 
		\int_{\Omega}\Gaussh \,u_h\,\volformh = \Kog(\gappr)(u_h)
		-\widetilde{\kappa^N}(u_h),
\end{align}
where we denote the volume form of $\gappr$ by $\volformh:=\vol[\gappr]{}$.
\end{definition}

Note that the  difference between Definition~\ref{def:lifted_distr_Gauss} and \cite[Definition 3.1]{GNSW2023} is the  decrease of degree of 
approximation space of $\Gaussh$ from $\VV_h^{k+1}$ to $\VV_h^k$ and the additional requirement of $k\geq 1$.

\subsection{Distributional covariant differential operators}
\label{subsec:distr_cov_ops}

In this section we review the definition of distributional covariant differential operators based on Regge metrics $g\in\Regge^+(\T)$. We focus on the incompatibility operator and its adjoint with their coordinate expressions. For an introduction and discussion we refer to e.g. to \cite[Section 4]{GNSW2023}. First, we focus on pointwise covariant differential operators for a given smooth metric $g\in\Sc^+(\Omega)$.

For a 1-form $\alpha\in\W^1(\Omega)$ and a (2,0)-tensor $\sigma\in\TT^2_0(\Omega)$ the covariant curl operators $\curl_g:\W^1(\Omega)\to\W^0(\Omega)$ and $\curl_g:\TT^2_0(\Omega)\to\W^1(\Omega)$ read in coordinates \cite{GNSW2023}
\begin{align*}
	\curl_g (\alpha)
	& =
	 \hat{\veps}^{ij} \pder{\alpha_j}{i},\\
	 \curl_g (\sigma)
	 & =
	 \hat{\veps}^{jk} (\pder{\sigma_{ik}}{j} - \Gamma_{ji}^m \sigma_{mk}) dx^i,
\end{align*}
where $\hat{\veps}^{ij}=\frac{1}{\sqrt{\det g}}\veps^{ij}$ and $\veps^{ij}$ denotes the permuting symbol being 1, -1, or 0 if $(i,j)$ is an even, odd, or no permutation of $(1,2)$, respectively. The covariant incompatibility operator $\inc_g=\curl_g\curl_g:\TT^2_0(\Omega)\to\W^0(\Omega)$ reads in coordinates
\begin{align*}
	\inc_g(\sigma)
	& =   \hat{\veps}^{qi} \hat{\veps}^{jk} \left(\d_j\d_q \sigma_{ik}
	-  \pder{( \Gamma_{ji}^m \sigma_{mk})}{q}
	- \Gamma_{lq}^l
	(\pder{\sigma_{ik}}{j} - \Gamma_{ji}^m \sigma_{mk})
	\right).
\end{align*}
Next, we consider for $f\in\W^0(\Omega)$ and $X\in \Xm{\Omega}$ the adjoint operators $\rot_g:\W^0(\Omega)\to\Xm{\Omega}$, $\rot_g:\Xm{\Omega}\to \TT^0_2(\Omega)$ and $\rot\rot_g=\rot_g\rot_g:\W^0(\Omega)\to\TT^0_2(\Omega)$. They read in coordinates \cite{GNSW2023}
\begin{subequations}
	\label{eq:rot_rotrot}
	\begin{align}
		\rot_g f & = \hat{\veps}^{iq} \d_q f\, \d_i =
		\frac{[\rot f]^i }{ \sqrt{\det g}} \d_i,
		\\ \label{eq:rotg-X}
		\rot_g X & = \hat{\veps}^{jq} (\d_q X^i + \Gamma^i_{qk} X^k)
		\d_i \otimes \d_j
		= \frac{[\rot [X]]^{ij} + \veps^{jq} \Gamma^i_{qk} X^k}{\sqrt{\det g}}
		\d_i \otimes \d_j,\\
	\label{eq:cov_rotrot_coo}
		\begin{split}
			\rot\rot_g f &= \rot_g(\rot_g f)^{ij}\d_i\otimes\d_j= \frac{[\rot [\rot_g f]]^{ij} + \veps^{jq} \Gamma^i_{qk} \mt{\rot_g f}^k}{\sqrt{\det g}}\d_i\otimes\d_j\\
		&=\frac{[\rot\rot f]^{ij} -[\rot f]^i\veps^{jq}\Gamma^l_{lq} + \veps^{jq} \Gamma^i_{qk} [\rot f]^k }{\det g}\d_i\otimes\d_j.
		\end{split}
\end{align}
\end{subequations}
In \eqref{eq:rot_rotrot} we used so-called vector and matrix proxies $\mt{\sigma}\in\R^{2\times2}$ and $\mt{X}\in\R^2$ for $\sigma\in \TT^0_2(\Omega)$ and $X\in\Xm{\Omega}$ \cite{Arnol18}. These proxies consist of coefficients in the coordinate basis expansions. For example, $\mt{\sigma}$ is the matrix, which $(i,j)$th entry is $\sigma^{ij}=\sigma(dx^i,dx^j)$. Then the standard two-dimensional Euclidean rotation operator applied to the vector $\mt{X}$ is $\rot\mt{X}^{ij}=\veps^{jk}\d_kX^i$.

There holds the integration by parts formulas for $f\in\W^0(\Omega)$, $\alpha\in \W^1(\Omega)$, $\sigma\in\TT^2_0(\Omega)$, and $X\in\Xm{\Omega}$ 
\begin{align*}
	\int_{\Omega}\langle\curl_g\sigma,X\rangle\,\volform &= \int_{\Omega}\langle\sigma,\rot_g X\rangle\,\volform + \int_{\d \Omega}\sigma(X,\gt)\,\vo{\d \Omega},\\
	\int_{\Omega}\curl_g\alpha\,f\,\volform &= \int_{\Omega}\langle\alpha,\rot_gf\rangle\,\volform + \int_{\d \Omega}\langle\alpha,\gt\rangle\,f\,\vo{\d \Omega},\\
	\int_{\Omega}\inc_g\sigma\,f\,\volform &= \int_{\Omega}\langle\curl_g\sigma,\rot_g f\rangle\,\volform + \int_{\d \Omega}\langle\curl_g\sigma,\gt\rangle\,f\,\vo{\d \Omega}\\
	&= \int_{\Omega}\langle\sigma,\rot\rot_g f\rangle\,\volform+\int_{\d \Omega}\big(\sigma(\rot_gf,\gt)+\langle\curl_g\sigma,\gt\rangle\,f\big)\,\vo{\d \Omega},
\end{align*}
i.e., $\inc_g$ and $\rot\rot_g$ are $\Ltwo$-adjoint with respect to the $g$-weighted $\Ltwo$ inner product.

Above, we used the $g$ inner product $\langle\cdot,\cdot\rangle:=g(\cdot,\cdot)$ extended from vector fields to arbitrary order tensors $\TT^l_k(\Omega)$, e.g.
\begin{align*}
	\langle\curl_g\sigma,X\rangle = (\curl_g\sigma)(X)\qquad \forall \sigma\in \TT^2_0(\Omega),\,X\in\Xm{\Omega}.
\end{align*}

The following definition of the distributional covariant incompatibility operator has been derived in~\cite[Proposition 4.6]{GNSW2023}. A similar expression for  the vertex contributions can be found in \cite{Chr2024}.

\begin{definition}
	Let $g\in\Regge^+(\T)$ and $u\in\VV(\T)$. The distributional incompatibility operator $\dincop:\Regge(\T)\to \VV(\T)^\prime$ is defined by 
	\begin{align}
		\label{eq:distr_cov_inc}
		(\dincop\sigma)(u) = \sum_{T\in\T}\bigg[\int_T \inc_g(\sigma)\,u\,\vo{T}-\int_{\d T}u\,\langle\curl_g\sigma+d(\sigma_{\gn\gt}),\gt\rangle\,\vo{\d T}+\sum_{V\in \V_T}\jmp{\sigma_{\gn\gt}}^T_V\,u(V)\bigg],
	\end{align}
where $\V_T=\{V\in\V\,:\, V\in T\}$ and, cf. e.g. \cite{GNSW2023b},
\[
\jmp{\sigma_{\gn\gt}}^T_V= \big(\sigma|_T(\gn^T_{E_+},\gt_V^{E_+})+\sigma|_T(\gn^T_{E_-},\gt_V^{E_-})\big)(V).
\]
The distributional covariant $\rot\rot$ operator $\drotrotop:\VV(\T)\to \Regge(\T)^\prime$ is defined by 
	\begin{align}
		\label{eq:distr_cov_rotrot}
		(\drotrotop u)(\sigma)=\sum_{T\in\T}\bigg[\int_T \langle \rot\rot_gu,\sigma\rangle\,\vo{T}+\int_{\partial T} \sigma_{\gt\gt}\,\langle\nabla_g u,\gn\rangle\,\vo{\partial T}\bigg].
	\end{align}
  Note that
  one of the boundary terms in~\eqref{eq:distr_cov_inc} admits a representation using  the geodesic curvature $\GeodCurv_{\gn}$, namely
	\begin{align*}
		d(\sigma_{\gn\gt})(\gt) = \nabla_{\gt}\big(\sigma(\gn,\gt)\big) = (\nabla_{\gt}\sigma)(\gn,\gt) + (\sigma(\gn,\gn)-\sigma(\gt,\gt))\,\GeodCurv_{\gn}.
	\end{align*}
\end{definition}

Next we show that the (distributional) adjoint of $\inc_g$ is  $\rot\rot_g$ in the following sense.
\begin{lemma}
	\label{lem:inc_rotrot_adjoint}
	Let $\sigma\in\Regge(\T)$ and $u\in \VV(\T)$. There holds
\begin{align*}
	(\dincop\sigma)(u) = (\drotrotop u)(\sigma).
\end{align*}
\end{lemma}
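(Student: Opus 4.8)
The plan is to prove the identity element by element and then sum over $T\in\T$. First I would apply the last integration-by-parts identity recorded before the statement (the $\Ltwo$-adjointness of $\inc_g$ and $\rot\rot_g$), but on a single element $T$ in place of $\Omega$, to obtain
\[
  \int_T\inc_g(\sigma)\,u\,\vo{T}
  = \int_T\langle\sigma,\rot\rot_g u\rangle\,\vo{T}
  + \int_{\d T}\big(\sigma(\rot_g u,\gt)+\langle\curl_g\sigma,\gt\rangle\,u\big)\,\vo{\d T}.
\]
Inserting this into the definition \eqref{eq:distr_cov_inc} of $\dincop$, the volume contribution $\int_T\langle\sigma,\rot\rot_g u\rangle\,\vo{T}$ already matches the volume term of \eqref{eq:distr_cov_rotrot}, while the two boundary occurrences of $\langle\curl_g\sigma,\gt\rangle\,u$ cancel. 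Hence the whole claim reduces to the purely boundary/vertex identity, for each $T$,
\[
  \int_{\d T}\sigma(\rot_g u,\gt)\,\vo{\d T}
  -\int_{\d T}u\,\langle d(\sigma_{\gn\gt}),\gt\rangle\,\vo{\d T}
  +\sum_{V\in\V_T}\jmp{\sigma_{\gn\gt}}^T_V\,u(V)
  = \int_{\d T}\sigma_{\gt\gt}\,\langle\nabla_g u,\gn\rangle\,\vo{\d T}.
\]

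Second, I would expand $\sigma(\rot_g u,\gt)$ in the local $\gpar$-orthonormal frame $(\gt,\gn)$ along $\d T$, with $\gn$ the unit normal chosen so that $(\gt,\gn)$ is positively oriented. Since $\rot_g u$ is the $\gpar$-rotation of the gradient, one has $\rot_g u=\langle\nabla_g u,\gn\rangle\,\gt-\langle\nabla_g u,\gt\rangle\,\gn$, and therefore the pointwise splitting
\[
  \sigma(\rot_g u,\gt)=\sigma_{\gt\gt}\,\langle\nabla_g u,\gn\rangle-\sigma_{\gn\gt}\,\nabla_\gt u .
\]
The first summand is exactly the integrand on the right-hand side, so it remains to show that the second summand, together with the $d(\sigma_{\gn\gt})$ term and the vertex jumps, cancels. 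Using that $\langle d(\sigma_{\gn\gt}),\gt\rangle=\nabla_\gt(\sigma_{\gn\gt})$ is the total tangential derivative of the scalar $\sigma_{\gn\gt}$ along the edge (the geodesic-curvature term displayed after \eqref{eq:distr_cov_rotrot} accounting for the rotation of the frame on curved edges), the product rule gives $-\sigma_{\gn\gt}\,\nabla_\gt u-u\,\nabla_\gt(\sigma_{\gn\gt})=-\nabla_\gt(u\,\sigma_{\gn\gt})$, a single total tangential derivative.

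Third, I would integrate $-\nabla_\gt(u\,\sigma_{\gn\gt})$ over $\d T$ edge by edge. On each edge $\nabla_\gt(\cdot)\,\vo{\d T}$ is an exact one-form, so the fundamental theorem of calculus collapses its integral to the endpoint values; collecting the two edges of $T$ meeting at each vertex $V\in\V_T$ reproduces precisely $\jmp{\sigma_{\gn\gt}}^T_V\,u(V)$ (with the $\gt_V^{E_\pm}$ pointing out of $V$), which cancels the explicit jump term and leaves only $\int_{\d T}\sigma_{\gt\gt}\,\langle\nabla_g u,\gn\rangle\,\vo{\d T}$. Summing the resulting per-element identity over $T\in\T$ yields $(\dincop\sigma)(u)=(\drotrotop u)(\sigma)$.

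I expect the main obstacle to be the sign and orientation bookkeeping in the last two steps. One must fix consistently the right-handed frame $(\gt,\gn)$, the orientation of $\gt$ induced on $\d T$, the rotation sense of $\rot_g$, and the normal conventions used in \eqref{eq:distr_cov_inc} and \eqref{eq:distr_cov_rotrot}, so that the endpoint contributions produced by the fundamental theorem of calculus match the vertex jumps $\jmp{\sigma_{\gn\gt}}^T_V$ exactly, rather than with a spurious factor or sign. It is worth verifying this cancellation first on a straight-edged element, where $\GeodCurv_{\gn}=0$ and $\sigma_{\gn\gt}$ is constant on each edge, so that the endpoint bookkeeping at a shared vertex is transparent and confirms that the two tangent directions combine into the stated jump.
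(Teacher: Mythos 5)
Your proposal is correct and follows essentially the same route as the paper's own proof: two elementwise integrations by parts linking $\inc_g$ to $\rot\rot_g$, cancellation of the $\langle\curl_g\sigma,\gt\rangle\,u$ boundary terms, the frame identity $\sigma(\rot_g u,\gt)=\sigma_{\gt\gt}\nabla_{\gn}u-\sigma_{\gn\gt}\nabla_{\gt}u$, and the fundamental theorem of calculus along $\d T$ whose endpoint contributions reproduce the vertex jumps $\jmp{\sigma_{\gn\gt}}^T_V u(V)$. The only difference is the order of the steps---the paper converts $-u\,\nabla_{\gt}(\sigma_{\gn\gt})$ into $\sigma_{\gn\gt}\nabla_{\gt}u$ via the product rule and vertex terms first and then integrates by parts on each element, whereas you integrate by parts first and handle the boundary/vertex identity last---and the orientation bookkeeping you flag as the main risk does work out exactly as you anticipate under the paper's conventions.
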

\begin{proof}
  This follows by integration by parts on each $T\in\T$:
	\begin{align*}
		(\dincop\sigma)(u) &=\sum_{T\in\T}\Big[\int_T \inc_g(\sigma)\,u\,\vo{T}-\int_{\d T}u\,\langle\curl_g\sigma+d(\sigma_{\gn\gt}),\gt\rangle\,\vo{\d T}+\sum_{V\in \V_T}\jmp{\sigma_{\gn\gt}}^T_V\,u(V)\Big]\\
		&=\sum_{T\in\T}\Big[\int_T \inc_g(\sigma)\,u\,\vo{T}-\int_{\d T}\big(u\,\langle\curl_g\sigma,\gt\rangle-\sigma_{\gn\gt}\nabla_{\gt}u\big)\,\vo{\d T}\Big]\\
		&=\sum_{T\in\T}\Big[\int_T \langle\curl_g\sigma,\rot_gu\rangle\,\vo{T}+\int_{\d T}\sigma_{\gn\gt}\nabla_{\gt}u\,\vo{\d T}\Big]\\
		&=\sum_{T\in\T}\Big[\int_T \langle\sigma,\rot\rot_gu\rangle\,\vo{T}+\int_{\d T}\big(\sigma(\gt,\rot_g u)+\sigma_{\gn\gt}\nabla_{\gt}u\big)\,\vo{\d T}\Big]\\
		&=\sum_{T\in\T}\Big[\int_T \langle\sigma,\rot\rot_gu\rangle\,\vo{T}+\int_{\d T}\sigma_{\gt\gt}\nabla_{\gn}u\,\vo{\d T}\Big] = (\drotrotop u)(\sigma).
	\end{align*}
\end{proof}

In \cite{GNSW2023} we proved an integral representation of the densitized Gauss curvature using a parametrization starting from the Euclidean metric $\Eucl$
\begin{align*}
	\Kog(g)(u)=-\frac{1}{2}\int_0^1b(\Eucl+t(g-\Eucl);g-\Eucl,u)\,dt,\quad \text{ with } b(g;\sigma,u)= (\dincop\sigma)(u)
\end{align*}
and used its integrand to derive convergence results. In this work, we follow the approach of \cite{GN2023,GN2023b,GNSW2023b} and consider directly the integral representation of the error as follows. Let $\gpar(t)= \gex + t(\gappr-\gex)$ and $\sigma=\gpar^\prime(t) = \gappr-\gex$. Then there holds the integral representation of the error
\begin{align}
	\label{eq:int_repr_error}
	\big(\Kog(\gappr)-\Kex\volformex\big)(u) = -\frac{1}{2}\int_0^1(\dincop[\gpar(t)]\sigma)(u)\,dt.
\end{align}
To derive error estimates, one important part will be analyzing the integrand of \eqref{eq:int_repr_error}, or, more precisely, its adjoint $(\dincop[\gpar(t)]\sigma)(u)=(\drotrotop[\gpar(t)] u)(\sigma)$.

\section{Error analysis}
\label{sec:error_analysis}

In this section we prove a priori error estimates for the lifted densitized Gauss curvature $\Gaussh\volformh$ and the Gauss curvature $\Gaussh$. First, we consider the $\Hmone$-norm as basis and then show estimates also for the stronger Sobolev norms $\Ltwo$ and $H^r$, $r\geq 1$. Let $\Omega\subset\R^2$ be a domain with a given exact metric tensor $\gex$ and corresponding exact Gauss curvature $\Kex=\Gauss(\gex)$. For simplicity, we assume in this section that homogeneous Dirichlet data $\Kex^D=0$ is described on the whole boundary, $\Gamma_D=\d\Omega$.

\subsection{Statement of main theorem}

We consider a sequence of quasiuniform
(hence shape-regular)
affine-equivalent triangulations $\{\T_h\}_{h>0}$ with maximal mesh-size $h=\max_{T\in\T_h}h_T$, where $h_T=\mathrm{diam}(T)$. On the triangulations a sequence of Regge metrics $\{\gappr\}_{h>0}$ with $\gappr\in\Regge_h^k$, $k\geq 0$ (defined in \eqref{eq:ReggeFE}) is given. To be precise, we assume that $\gappr$ is the canonical interpolant of $\gex$. This interpolant \cite{li18}, denoted by
$\RegInt[k]:W^{s,p}(\Omega,\Sc)\to\Regge_h^k$, $p\in[1,\infty]$, $s\in (1/p,\infty]$, satisfies the following
equations
\begin{subequations}
	\label{eq:RegInt}  
	\begin{align}  
		\int_E(\RegInt[k]\sigma)_{\tv\tv}\,q\,\dl
		& = \int_E\sigma_{\tv\tv}\,q\,\dl
		&& \text{for all } q\in \Pol^{k}(E)
		\text{ and edges $E$ of $\d T$},\label{eq:RefInt_edge}
		\\
		\int_T\RegInt[k]\sigma:\rho\,\da
		& = \int_T\sigma:\rho\,\da
		&& \text{for all } \rho\in \Pol^{k-1}(T,\R^{2 \times 2}), \,T\in\T.
		\label{eq:RefInt_trig}
	\end{align}
\end{subequations}
Equations \eqref{eq:RegInt} can be interpreted as orthogonality requirements, preserving specific moments at edges and elements. Note that when $\rho$ is a skew-symmetric matrix, both sides of~\eqref{eq:RefInt_trig} vanish, so~\eqref{eq:RefInt_trig} is nontrivial only for symmetric $\rho$. 

Throughout, we use standard
Sobolev spaces $\Wsp[\Omega]$ and their norms and seminorms for any $s\geq 0$ and
$p\in [1,\infty].$ When the domain is $\Omega$, we omit it from the
norm notation if there is no chance of confusion.  We also use the
elementwise norms $\|u\|_{\Wsph}^p =\sum_{T\in \T_h}\|u\|^p_{\Wsp[T]},$
with the usual adaption for $p=\infty$. When $p=2$, we put
$\|\cdot\|_{\Hsh}=\|\cdot\|_{W^{s,2}_h}$.  Let $D\subset\Omega$ and define
\begin{align}
	\label{eq:def_norms}
\nrm{\sigma}_{2,D} = \| \sigma \|_{L^2(D)} + h \| \sigma \|_{H_h^1(D)}.
\end{align}
If $D$ is the whole domain $\Omega$, we neglect the subscript in \eqref{eq:def_norms}.

We write $a\lesssim b$ if there exists a mesh-size independent
constant $C>0$
which may depend on---unless otherwise stated---the domain $\Omega$, the polynomial degree $k$,
    the shape regularity constant $\sigma(\T_h)$ of $\T_h$, the $W^{2,\infty}$-norm of $\gex$, $\Linf$-norm of $\gex^{-1}$, and the $\Hone$-norm of $\Kex$ i.e.
\begin{align}
	\label{eq:dependencies_C}
	C=C(\Omega,k,\sigma(\T_h),\|\gex\|_{W^{2,\infty}},\|\gex^{-1}\|_{\Linf},\|\Kex\|_{\Hone}).
\end{align}
We abbreviate the $\Ltwo$-inner product of two scalar functions and the $g$-weighted inner product by
\[
(u,v)_{\Ltwo}:=\int_{\Omega}uv\,\da,\qquad (u,v)_{\Ltwo,g}:=\int_{\Omega}uv\,\sqrt{\det g}\,\da,\qquad\qquad  u,v\in\Ltwo[\Omega].
\]

Our main theorem reads as follows:
\begin{theorem}
	\label{thm:improved_rate_Hm1}
	Let $k\geq1$ be an integer, $\{\T_h\}_{h>0}$ a sequence of quasiuniform 
        triangulations,  $\{\gappr\}_{h>0}$ a sequence of metric approximations $\gappr=\RegInt[k]\gex$ with $\gex\in W^{2,\infty}(\Omega,\Sc),$
        so that  $\Kex\in\Ltwo[\Omega]$,
        $\volformh=\volform(\gappr)$, and $\Gaussh\in\Vo_h^k$ the lifted distributional Gauss curvature from~\eqref{eq:lifted_distr_Gauss}.
        Suppose also that $\Kex=0$ on the boundary $\d \Omega$.
        Then there exists an $h_0>0$ such that for all $h\leq h_0$
	\begin{align*}
		\|\Gaussh\volformh-\Kex\volformex\|_{\Hmone}&\leq C h\big(\nrm{\gappr-\gex}_2+\inf\limits_{v_h\in\Vo_h^k}\|v_h-\Kex\|_{\Ltwo[]}+\|\gappr-\gex\|_{\Linf}\big),
	\end{align*}
	where the constant $C$ depends on $\Omega$, the shape regularity, polynomial degree $k$, $\|\gex\|_{W^{2,\infty}}$, and $\|\gex^{-1}\|_{\Linf}$. If additionally for $0\leq l\leq k+1$, $\gex\in W^{l,\infty}(\Omega,\Sc)$ and $\Kex\in H^{l}(\Omega)$, then
	\begin{align*}
		\|\Gaussh\volformh-\Kex\volformex\|_{\Hmone}&\leq C h^{l+1}\big(\|\gex\|_{W^{l,\infty}}+\|\Kex\|_{H^{l}}\big).
	\end{align*}
\end{theorem}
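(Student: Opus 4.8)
The plan is to estimate the $\Hmone$-norm by duality, testing against an arbitrary $u\in\Honez[\Omega]$ with $\|u\|_{\Hone}=1$ and bounding $\langle\Gaussh\volformh-\Kex\volformex,u\rangle=\int_\Omega\Gaussh\,u\,\volformh-\int_\Omega\Kex\,u\,\volformex$. The decisive choice is to take $u_h:=P_h u\in\Vo_h^k$ to be the $\volformh$-weighted $\Ltwo$-projection of $u$. Since $\Kex=0$ on $\d\Omega$ forces $\Gaussh\in\Vo_h^k$, the projection orthogonality $\int_\Omega\Gaussh(u-u_h)\volformh=0$ lets me replace $u$ by $u_h$ in the first integral and invoke the defining relation~\eqref{eq:lifted_distr_Gauss}, giving $\int_\Omega\Gaussh\,u\,\volformh=\Kog(\gappr)(u_h)$. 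Adding and subtracting $\int_\Omega\Kex u_h\,\volformex$ and using the error representation~\eqref{eq:int_repr_error} together with Lemma~\ref{lem:inc_rotrot_adjoint}, I arrive at the clean splitting
\begin{align*}
\langle\Gaussh\volformh-\Kex\volformex,u\rangle = -\tfrac12\int_0^1(\drotrotop[\gpar(t)]u_h)(\sigma)\,dt + \int_\Omega\Kex(u_h-u)\,\volformex,
\end{align*}
with $\sigma=\gappr-\gex$. Everything then reduces to estimating these two terms by $h\|u\|_{\Hone}$ times the asserted factors, using the $H^1$-stability $\|u_h\|_{\Hone}\lesssim\|u\|_{\Hone}$ and approximation $\|u-u_h\|_{\Ltwo}\lesssim h\|u\|_{\Hone}$ of $P_h$ on quasiuniform meshes.

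The second term is the easy one. I would split $\volformex=\volformh+(\volformex-\volformh)$. For the $\volformh$-part, projection orthogonality allows inserting any $v_h\in\Vo_h^k$, so $\int_\Omega\Kex(u_h-u)\volformh=\int_\Omega(\Kex-v_h)(u_h-u)\volformh\lesssim\inf_{v_h\in\Vo_h^k}\|\Kex-v_h\|_{\Ltwo}\,\|u-u_h\|_{\Ltwo}$, which yields $h\,\inf_{v_h}\|v_h-\Kex\|_{\Ltwo}\|u\|_{\Hone}$. For the volume-form difference I use $\|\volformex-\volformh\|_{\Linf}\lesssim\|\gappr-\gex\|_{\Linf}$ (smooth dependence of $\sqrt{\det g}$ on $g$), so that this contributes $h\,\|\gappr-\gex\|_{\Linf}\|u\|_{\Hone}$, with $\|\Kex\|_{\Linf}$ absorbed into the constant through $\|\gex\|_{W^{2,\infty}}$ and $\|\gex^{-1}\|_{\Linf}$.

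The heart of the argument is the first term, and here the moment properties of the canonical Regge interpolant enter decisively. Expanding $(\drotrotop[\gpar(t)]u_h)(\sigma)$ via~\eqref{eq:distr_cov_rotrot} produces element integrals $\int_T\langle\rot\rot_{\gpar(t)}u_h,\sigma\rangle\vo{T}$ and edge integrals $\int_{\d T}\sigma_{\gt\gt}\langle\nabla_{\gpar(t)}u_h,\gn\rangle\vo{\d T}$. Rewriting the element integrand in the Euclidean pairing as $\int_T\sigma:M\,\da$ for a matrix field $M=M(\gpar(t),u_h)$ — whose leading contribution is a smooth coefficient times the (degree $k-2$) Euclidean term $\rot\rot u_h$, with the remaining Christoffel contributions being smooth coefficients times the (degree $k-1$) gradient $\nabla u_h$ — I subtract an arbitrary $\rho\in\Pol^{k-1}(T,\R^{2\times 2})$ using orthogonality~\eqref{eq:RefInt_trig}. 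Choosing $\rho$ to be the local polynomial approximation of $M$ (constant coefficient for the $\nabla u_h$ part, affine for the $\rot\rot u_h$ part) and combining the $\Ltwo$-approximation gain against the inverse estimates needed to control $\rot\rot u_h$ and $\nabla u_h$ produces the factor $h$, so that each element contributes $\lesssim h\,\nrm{\sigma}_{2,T}\|\nabla u_h\|_{\Ltwo[T]}$; summing over $T$ and integrating in $t$ gives $h\,\nrm{\gappr-\gex}_2\|u\|_{\Hone}$. The edge integrals are handled analogously: writing $\sigma_{\gt\gt}\vo{\d T}=\sigma_{\tv\tv}\cdot(\text{smooth})\,d\tv$ and noting that $\langle\nabla_{\gpar(t)}u_h,\gn\rangle|_E\in\Pol^{k-1}(E)$, the degree-$k$ edge orthogonality~\eqref{eq:RefInt_edge} lets me subtract an affine-coefficient approximation, and the trace and inverse estimates again supply the needed power of $h$.

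The main obstacle is precisely this balancing of polynomial approximation against inverse estimates in the element and edge terms, and in particular the genuinely nonlinear contributions in which $\partial\gpar(t)=\partial\gex+t\,\partial\sigma$ contributes the non-smooth piece $\partial\sigma$: these cannot exploit the smoothness of the metric and must instead be estimated directly as products $\sigma\cdot\partial\sigma\cdot\nabla u_h$, bounded by $\|\gappr-\gex\|_{\Linf}\|\sigma\|_{H^1_h}\|u\|_{\Hone}$, which is why the $\|\gappr-\gex\|_{\Linf}$ term appears and why the estimate is stated only for $h\le h_0$ (so that $\gpar(t)$ stays uniformly positive definite and equivalent to $\gex$ for all $t\in[0,1]$, with all metric-dependent coefficients and their polynomial approximations controlled uniformly in $t$). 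The requirement $k\ge1$ is what makes the element orthogonality~\eqref{eq:RefInt_trig} nontrivial—at least constants are reproduced—matching the reduced lifting degree. Finally, the rate estimate follows by inserting into the first bound the canonical interpolation estimates $\nrm{\gappr-\gex}_2+\|\gappr-\gex\|_{\Linf}\lesssim h^{l}\|\gex\|_{W^{l,\infty}}$ and the best-approximation estimate $\inf_{v_h}\|v_h-\Kex\|_{\Ltwo}\lesssim h^{l}\|\Kex\|_{H^{l}}$, both valid for $0\le l\le k+1$.
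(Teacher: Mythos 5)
Your proposal is correct and rests on the same machinery as the paper's proof---the defining relation~\eqref{eq:lifted_distr_Gauss}, the error representation~\eqref{eq:int_repr_error}, the adjointness Lemma~\ref{lem:inc_rotrot_adjoint}, and a bound on $(\drotrotop[\gpar(t)]u_h)(\sigma)$ whose derivation via the moment conditions \eqref{eq:RefInt_edge}--\eqref{eq:RefInt_trig}, polynomial approximation of the metric-dependent coefficients, and inverse/trace estimates is exactly the paper's Proposition~\ref{prop:conv_rotrot}---but your decomposition is genuinely different, and in one respect cleaner. The paper projects with respect to $\volformex$ and splits the error into three terms $s_1+s_2+s_3$, where $s_3=(\Gaussh(\volformh-\volformex),u-u_h)_{\Ltwo}$ carries the factor $\Gaussh$; bounding $s_3$ forces a separate a priori bound $\|\Gaussh\|_{\Ltwo}\lesssim 1$, which the paper establishes by testing its $s_1$-type estimate with $\Gaussh$ itself and invoking an inverse inequality. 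You instead project with respect to $\volformh$, so the lifting equation absorbs the first integral exactly and only two terms remain; in your volume-form-difference term the \emph{exact} curvature $\Kex$ (whose $\Linf$-norm is controlled by $\|\gex\|_{W^{2,\infty}}$ and $\|\gex^{-1}\|_{\Linf}$) appears in place of $\Gaussh$, so the boundedness argument for $\Gaussh$ is bypassed entirely. The mild price is that you need $\Hone$-stability and the approximation property of the $\volformh$-weighted projection, whose weight is only piecewise smooth; this is standard on quasiuniform meshes (and is essentially \cite[Lemma 4.7]{Gaw20}, where the discrete-metric-weighted projection is used), so your appeal to it is legitimate.

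Two imprecisions should be fixed, though neither is fatal. First, in the edge terms, the claim $\langle\nabla_{\gpar(t)}u_h,\gn\rangle|_E\in\Pol^{k-1}(E)$ is false as written: $\gn$ and the normalizations involve $\gpar(t)^{-1}$ and square roots. What is polynomial of degree $k-1$ are the Euclidean components $\d_i u_h$; every metric-dependent factor must be collected into the smooth coefficient that is then approximated by edgewise affine polynomials (the paper's function $H(g)$), after which the product has degree at most $k$ and \eqref{eq:RefInt_edge} annihilates it. Your subsequent step indicates you intend precisely this, but the statement needs correcting. Second, your separate treatment of ``non-smooth pieces'' involving $\d\sigma$ is unnecessary: all coefficients are evaluated at $\gpar(t)$, which by stability of the canonical Regge interpolant (cf.~\eqref{eq:reg_appr_lo_vol} and \eqref{eq:bound_gt}, using $k\geq 1$) is uniformly bounded in the elementwise $W^{2,\infty}_h$-norm, so the elementwise coefficient-projection estimates apply directly and give $h\nrm{\sigma}_2\|u_h\|_{\Hone}$ with no extra term. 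Your workaround bound $\|\gappr-\gex\|_{\Linf}\|\sigma\|_{\Honeh}\|u\|_{\Hone}$ is still compatible with the asserted estimate, but only because $\|\sigma\|_{\Honeh}\lesssim h\,\|\gex\|_{W^{2,\infty}}$ for the canonical interpolant with $k\geq1$; as stated, that term appears to lack the required factor of $h$, so you should make this point explicit.
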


Further
convergence results in stronger Sobolev norms
follow.

\begin{corollary}
	\label{cor:improved_conv_dens_curvature}
	Under the assumptions of Theorem~\ref{thm:improved_rate_Hm1}, there holds for  $0\leq l \leq k+1$, $0\leq r\leq l$
	\begin{align*}
			\|\Gaussh\volformh-\Kex\volformex\|_{H^r_h}&\leq C h^{-r}\big(\|\gappr-\gex\|_{L^\infty}+\nrm{\gappr-\gex}_2+\inf\limits_{v_h\in\Vo_h^{k}}\|v_h-\Kex\|_{\Ltwo}\\
			&\qquad\qquad+h^{l}\|\Kex\|_{H^{l}}+\inf\limits_{v_h\in\Vo_h^{k}}\|v_h-\Kex\volformex\|_{\Ltwo}+h^{l}\|\Kex\volformex\|_{H^{l}}\big)\\
			&\leq C h^{l-r}\big(\|\gex\|_{W^{l,\infty}} + \|\Kex\volformex\|_{H^{l}}+ \|\Kex\|_{H^{l}}\big),
	\end{align*}
where the constant $C>0$ depends additionally on $\|\Kex\|_{\Hone}$.
\end{corollary}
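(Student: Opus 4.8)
The plan is to deduce the $H^r_h$-estimate from the already-established $\Hmone$-bound of Theorem~\ref{thm:improved_rate_Hm1} by a standard inverse-estimate/duality bootstrap, the only nonstandard ingredient being the non-polynomial volume weight $\volformh=\sqrt{\det\gappr}$. I would first dispatch the second inequality: once the first is proven, it follows by inserting the interpolation error of the canonical Regge interpolant, $\nrm{\gappr-\gex}_2+\|\gappr-\gex\|_{\Linf}\lesssim h^{l}\|\gex\|_{W^{l,\infty}}$, together with the Lagrange best-approximation bounds $\inf_{v_h\in\Vo_h^{k}}\|v_h-w\|_{\Ltwo}\lesssim h^{l}\|w\|_{H^{l}}$ applied to $w=\Kex$ and $w=\Kex\volformex$ (with $l\le k+1$). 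So the real work lies entirely in the first inequality.

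For the reduction-to-discrete step, fix a quasi-interpolant $q_h\in\Vo_h^{k}$ of $\Kex$ and split
\[
\Gaussh\volformh-\Kex\volformex=\volformh\,(\Gaussh-q_h)+\big[(q_h-\Kex)\volformh+\Kex(\volformh-\volformex)\big].
\]
Since $\gappr=\RegInt[k]\gex$ stays uniformly (in $h$) close to the smooth $\gex$ and $\det\gappr$ is bounded below, the weight $\volformh$ and its reciprocal have $h$-uniform $W^{r,\infty}_h$-bounds, and $\|\volformh-\volformex\|_{\Ltwo}\lesssim\|\gappr-\gex\|_{\Linf}$, with the higher elementwise derivatives of $\volformh-\volformex$ controlled through the piecewise-polynomial $\det\gappr$. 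Hence the bracketed smooth remainder can be estimated directly in $H^r_h$ by the product rule and inverse estimates applied to its discrete factors, producing the $h^{-r}$-scaled terms $\inf_{v_h}\|v_h-\Kex\|_{\Ltwo}$, $h^{l}\|\Kex\|_{H^{l}}$, and $\|\gappr-\gex\|_{\Linf}$.

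For the genuinely discrete piece $d_h:=\Gaussh-q_h\in\Vo_h^{k}$ I would bound $\|\volformh d_h\|_{H^r_h}\le C\|d_h\|_{H^r_h}\le C h^{-(r+1)}\|d_h\|_{\Hmone}$ by the inverse inequality and then estimate $\|d_h\|_{\Hmone}$ by duality. Pairing with $\phi\in\Honez$, rewriting $\int_\Omega d_h\,\phi=(\Gaussh-q_h,\phi/\volformh)_{\Ltwo,\gappr}$, and inserting the Scott–Zhang interpolant $\psi_h\in\Vo_h^{k}$ of $\psi:=\phi/\volformh$, the leading contribution is $\int_\Omega(\Gaussh\volformh-\Kex\volformex)\,\psi_h=(\Kog(\gappr)-\Kex\volformex)(\psi_h)$, the \emph{consistency identity} built into Definition~\ref{def:lifted_distr_Gauss} (using $\Kog(\gex)(\psi_h)=\int_\Omega\Kex\,\psi_h\,\volformex$, as $\gex$ is smooth). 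This is exactly the functional controlled by the $\Hmone$-bound of Theorem~\ref{thm:improved_rate_Hm1}, which supplies an extra factor $h$; a densitized best-approximation of $\Kex\volformex$ against the remaining pieces yields $\inf_{v_h}\|v_h-\Kex\volformex\|_{\Ltwo}$ and $h^{l}\|\Kex\volformex\|_{H^{l}}$. Multiplying by $h^{-(r+1)}$ then converts the consistency factor $h$ into the advertised $h^{-r}$ in front of $\nrm{\gappr-\gex}_2$, $\inf_{v_h}\|v_h-\Kex\volformex\|_{\Ltwo}$, and $h^{l}\|\Kex\volformex\|_{H^{l}}$.

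The hard part is the bookkeeping forced by the non-polynomial weight. One must ensure that the weight difference $\volformh-\volformex$ enters only at order $h^{-r}\|\gappr-\gex\|_{\Linf}$ and never at $h^{-r-1}$; this is precisely why I keep $\Kex(\volformh-\volformex)$ inside the directly-estimated remainder rather than routing it through the $\Hmone$-duality, where converting between the densitized and non-densitized errors would cost a spurious power of $h$. Finally, the interpolation remainder $(\Gaussh-q_h,\psi-\psi_h)_{\Ltwo,\gappr}\lesssim h\,\|d_h\|_{\Ltwo}\|\phi\|_{\Hone}$ is absorbed using the discrete inverse inequality $h\|d_h\|_{\Ltwo}\lesssim\|d_h\|_{\Hmone}$, valid for $d_h\in\Vo_h^k$ and $h\le h_0$, which closes the estimate.
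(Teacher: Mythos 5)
Your high-level skeleton (push the $H^r_h$-norm down to a weak norm by inverse estimates and an interpolant, then reuse the machinery behind Theorem~\ref{thm:improved_rate_Hm1}) has the right shape, but the power-of-$h$ accounting in your discrete piece has a genuine gap. Since you bound $\|\volformh d_h\|_{H^r_h}\lesssim h^{-(r+1)}\|d_h\|_{\Hmone}$, \emph{every} term you produce in $\|d_h\|_{\Hmone}$ must carry an extra factor of $h$ beyond its natural $\Ltwo$ size, or the advertised $h^{-r}$ prefactor degenerates to $h^{-r-1}$. Your duality argument secures this factor only for the consistency term, $(\Kog(\gappr)-\Kex\volformex)(\psi_h)\lesssim h\nrm{\gappr-\gex}_2\|\psi_h\|_{\Hone}$, via Proposition~\ref{prop:conv_rotrot}. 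The ``remaining pieces'' $\int_\Omega(\Kex\volformex-q_h\volformh)\,\psi_h\,\da$ do \emph{not} get it from a ``densitized best-approximation'': plain Cauchy--Schwarz gives only $O(h^l)$, hence $h^{-(r+1)}\cdot O(h^l)=O(h^{l-r-1})$, one order short of the claim. To gain the missing $h$ you need two specific orthogonality mechanisms that your sketch never invokes: (a) $q_h$ must be chosen with a projection property (e.g.\ the $\gex$-weighted $\Ltwo$-projection~\eqref{eq:proj_l2_orth}, so that $\int_\Omega(\Kex-q_h)\psi_h\,\volformex=0$, or the $\volformh$-weighted lift of $\Kex\volformex$, which makes the term vanish identically), a mere ``quasi-interpolant of $\Kex$'' does not suffice; and (b) the leftover weight term $\int_\Omega q_h(\volformex-\volformh)\psi_h\,\da$ needs the moment orthogonality~\eqref{eq:RefInt_trig} of the canonical Regge interpolant --- exactly the mechanism of the paper's Lemma~\ref{lem:approx_diff_volforms} --- to be bounded by $h\,\|\gappr-\gex\|_{\Linf}$ times discrete $H^1$-type norms. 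In particular, your claim that keeping $\Kex(\volformh-\volformex)$ in the directly-estimated remainder spares the duality route from the weight difference is false: the difference $\volformh-\volformex$ re-enters through $q_h\volformh$ versus $\Kex\volformex$, and without (b) the $\|\gappr-\gex\|_{\Linf}$ contribution also lands at order $h^{-(r+1)}$ rather than $h^{-r}$.

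A second, smaller defect: $\psi=\phi/\volformh$ is not in $\Honez[\Omega]$, because $\volformh=\sqrt{\det\gappr}$ jumps across element interfaces ($\gappr$ is only $tt$-continuous), so the Scott--Zhang estimates $\|\psi-\psi_h\|_{\Ltwo}\lesssim h\|\phi\|_{\Hone}$ and $\|\psi_h\|_{\Hone}\lesssim\|\phi\|_{\Hone}$ are not available as stated; this is repairable (the jumps of $1/\volformh$ are $O(\|\gappr-\gex\|_{\Linf})=O(h^2)$, so an averaging quasi-interpolant works), but it must be argued. For comparison, the paper sidesteps both issues by never dividing by $\volformh$: it first proves the pure-curvature estimates (Theorem~\ref{thm:improved_rate_Hm1_pure}, Corollary~\ref{cor:improved_conv_curvature}) through a bootstrap yielding $\|\Gaussh\|_{\Honeh}\lesssim 1$, with Lemma~\ref{lem:approx_diff_volforms} absorbing the weight difference, then converts to the densitized $\Ltwo$ error by an $\Linf$ bound on $\volformh-\volformex$, and only then passes to $H^r_h$ by a Scott--Zhang interpolant of $\Kex\volformex$ and inverse estimates.
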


\begin{remark}[Convergence of pure Gauss curvature]
  \label{rem:pureK}
  In  contrast to the densitized Gauss curvature, we refer to the $\Kex$ (without multiplication by  the volume form) as the ``pure Gauss curvature.'' 
  For the error in the pure Gauss curvature, $\|\Gaussh-\Kex\|_{H^r_h}$, $-1\leq r \leq k$, the same convergence rates as proved in Theorem~\ref{thm:improved_rate_Hm1} and Corollary~\ref{cor:improved_conv_dens_curvature} are obtained: see Theorem~\ref{thm:improved_rate_Hm1_pure} and Corollary~\ref{cor:improved_conv_curvature} in Section~\ref{subsec:ana_pure_Gauss}.
\end{remark}
\begin{remark}[Optimal convergence]
	If we insert $l=k+1$ in Theorem~\ref{thm:improved_rate_Hm1}, we obtain the convergence rate $\mathcal{O}(h^{k+2})$, which is of one order higher than $\mathcal{O}(h^{k+1})$ proved in \cite[Theorem 6.5]{GNSW2023} and two orders higher compared to \cite[Theorem 4.1]{Gaw20}. 
	Furthermore, using $l=k+1$ and $r=0$ in Corollary~\ref{cor:improved_conv_dens_curvature} yields an $\Ltwo$ convergence rate of  $\mathcal{O}(h^{k+1})$, which is the ``optimal'' in the sense that it is the rate of convergence of the $L^2$~best approximation  from  $\Vo^k_h$. The requirement of at least linear elements, $k\geq 1$, cannot be relaxed to $k=0$ as for $\gappr\in\Regge_h^0$ there is no (non-trivial) Lagrange finite element function in $\Vo_h^0$. In \cite{GNSW2023} we observed that the pairing of the lowest order elements $\gappr\in\Regge_h^0$ and $\Gaussh\in\VV_h^1$ does not lead to an improved $\Ltwo$-convergence rate of $\mathcal{O}(h)$. In fact, our numerical examples  in \cite{GNSW2023} showed that we may expect no convergence in the $\Ltwo$-norm  in general in this case.
\end{remark}

\subsection{Basic estimates}

We need a number of preliminary estimates to proceed with our
analysis.  The approximation properties of the Regge elements are well
understood.  By the Bramble-Hilbert lemma, on any $T \in \T$, see \cite[Theorem 2.5]{li18},
	\begin{align}
		&\|(\idop-\RegInt[k])\sigma\|_{W^{r,p}(T)}\leq C h^{l-r} |\sigma|_{W^{l,p}(T)},
		\label{eq:reg_appr_lo_vol}
	\end{align}
	for $p\in [1,\infty]$, $l\in(1/p,k+1]$, $r\in [0,l]$, $\sigma\in W^{l,p}(T,\Sc)$, and $C$ depends on $k$, $r$, $l$, and the shape regularity $\sigma(T)$ of $T$. A similar estimate holds for the elementwise $L^2(T)$-projection into the space of polynomials of order $k$, which we denote by $\LtwoInt[k]$, see e.g. \cite[Theorem 4.4.4]{BS2008},
	\begin{align*}
		&\|(\idop-\LtwoInt[k])f\|_{\Lp[T]}\leq h^{l} C|f|_{W^{l,p}(T)}
	\end{align*}
for $l\in(1/p,k+1]$, $f\in W^{l,p}(T)$, and $C$ depends on  $k$, $l$, and the shape regularity $\sigma(T)$ of $T$. The same holds if we replace $T$ by an edge $E\in\E$ and the edge-wise $\Ltwo$-projection denoted by $\LtwoInt[E,k]$. 

Let $E\subset \d T$ be an edge of $T$. We also need the following well-known estimates that follow
from scaling arguments: for all $u\in \Hone[T]$
\begin{align}
	\|u\|_{\Ltwo[E]}^2\,\lesssim\, 
	h^{-1}\|u\|_{\Ltwo[T]}^2 +h\|\nabla u\|_{\Ltwo[T]}^2
	\label{eq:trace_inequ}
\end{align}
and for all $u\in \Pol^k(T)$,
\begin{align}
	&\|u\|_{\Ltwo[E]}\,
	\lesssim h^{-1/2}\|u\|_{\Ltwo[T]},\qquad |u|_{H^l(T)}\,\lesssim\,
	h^{-l}\|u\|_{\Ltwo[T]},\quad 1\leq l\leq k.
	\label{eq:discr_inverse_inequ}
\end{align}

The $\Ltwo$-orthogonal projection with respect to $\gex$ into Lagrange elements $\LtwoProj[k]:\Ltwo(\Omega)\to\Vo_h^k$ is defined via its orthogonality property
\begin{align}
	\label{eq:proj_l2_orth}
	\int_{\Omega}\big(\LtwoProj[k]u-u\big)\,v_h\,\volformex=0,\qquad \forall v_h\in\Vo_h^k.
\end{align}
It has the following well-known
stability and approximation properties on quasiuniform meshes, see e.g.~\cite{CrouzThome87} or
\cite[Lemma 4.7]{Gaw20},
\begin{subequations}
	\begin{align}
	&\|\LtwoProj[k]u\|_{\Ltwo}\lesssim \|u\|_{\Ltwo},&&\quad \forall u\in\Ltwo[\Omega],\label{eq:proj_l2_stab}\\
	&\|\LtwoProj[k]u\|_{\Hone}\lesssim \|u\|_{\Hone},&&\quad \forall u\in\Honez[\Omega],\label{eq:proj_h1_stab}\\
	&\|\LtwoProj[k]u-u\|_{\Ltwo}\lesssim \inf\limits_{u_h\in \Vo_h^k}\|u_h-u\|_{\Ltwo}\lesssim h\,\|u\|_{\Hone}, &&\quad \forall u\in\Honez[\Omega].\label{eq:proj_approx}
\end{align}
\end{subequations}

Since $\gappr = \RegInt[k] \gex$ approaches $\gex$ as $h \to 0$, we tacitly assume throughout that $h$ has become sufficiently small ($h\le h_0$) to guarantee that the approximated metric $\gappr$ is positive definite throughout. Further, thanks to \eqref{eq:reg_appr_lo_vol} (with $p=r=l=2$ and $k\geq 1$) and \eqref{eq:dependencies_C} we have that $\sup_{T\in\T_h}\|\gappr\|_{W^{2,\infty}_h(T)}\leq C$. The following estimates are a consequence of \cite{Gaw20,GN2023,GN2023b}: for $p\in [1,\infty]$, $t\in[0,1]$, $\gpar(t)=\gex+t(\gappr-\gex)$, $l \in\{ 0, 1,2\}$,
\begin{subequations}
	\begin{align}
	&\|\gpar(t)-\gex\|_{W^{l,p}_h}+\|\gpar^{-1}(t)-\gex^{-1}\|_{W^{l,p}_h}+\|\sqrt{\det {\gpar(t)}}-\sqrt{\det \gex}\|_{W_h^{l,p}}
	\,\lesssim \,
	\|\gappr-\gex\|_{W_h^{l,p}},\label{eq:est_inv_by_ten}
	\\
	&\|{\gpar(t)}\|_{W^{2,\infty}_h}+\|{\gpar(t)}^{-1}\|_{\Linf} + \|\sqrt{\det {\gpar(t)}}\|_{\Linf}+\|\sqrt{\det {\gpar(t)^{-1}}}\|_{\Linf}
	\,\lesssim\, 1.  \label{eq:bound_gt}
\end{align}
\end{subequations}
Further, for all $x$ in the interior of any element $T\in \T$ and for all
$u \in \R^2$, as well as for the $\Ltwo$ inner product there holds the following equivalences
\begin{equation*}
	u^\trans u\,\lesssim\, u^\trans {\gpar(t)}(x)u
	\,\lesssim\, u^\trans u,\qquad\qquad (\cdot,\cdot)_{\Ltwo}\lesssim (\cdot,\cdot)_{\Ltwo,\gpar(t)}\lesssim(\cdot,\cdot)_{\Ltwo}.
\end{equation*}

\subsection{Analysis of distributional rotrot operator}
In this section, we derive improved convergence rates of the distributional covariant $\rot\rot_g$ operator \eqref{eq:distr_cov_rotrot}. The proof strategy follows \cite[Theorem 6.1, (6.3)]{GNSW2023}, however, adapted from the distributional covariant $\curl$ to the $\rot\rot$ operator.
\begin{proposition}
	\label{prop:conv_rotrot}
	Let $k\geq 1$ be an integer, $g\in\Regge^+(\T)$, $\rho\in W^{s,p}(\Omega,\Sc)$, $p\in [1,\infty]$, $s\in (1/p,\infty]$, $\rho_h=\RegInt[k]\rho$, and $u_h\in \Vo_h^k$.  Then there holds
	\begin{align*}
		|(\drotrotop u_h)(\rho-\rho_h)|\leq C h\nrm{\rho-\rho_h}_2\|u_h\|_{\Hone},
	\end{align*} 
where the constant $C>0$ depends on $\Omega$, the mesh regularity, $k$, $\|g\|_{W^{2,\infty}_h}$, and $\|g^{-1}\|_{\Linf}$.
\end{proposition}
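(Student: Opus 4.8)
The plan is to work elementwise, writing $\eta := \rho-\rho_h$ and splitting $(\drotrotop u_h)(\eta)$ via the definition \eqref{eq:distr_cov_rotrot} into a volume contribution $\sum_T\int_T\langle\rot\rot_g u_h,\eta\rangle\,\vo{T}$ and a boundary contribution $\sum_T\int_{\d T}\eta_{\gt\gt}\,\langle\nabla_g u_h,\gn\rangle\,\vo{\d T}$, and to extract the factor $h$ in each by combining the two moment-preservation identities \eqref{eq:RefInt_edge}--\eqref{eq:RefInt_trig} of the canonical interpolant with a Taylor expansion of the (elementwise smooth) metric coefficients. The guiding principle is that, because $u_h\in\Vo_h^k$ rather than $\Vo_h^{k+1}$, the polynomial skeleton of $\rot\rot_g u_h$ and of the normal derivative $\langle\nabla_g u_h,\gn\rangle=\gn^l\d_l u_h$ has degree \emph{one less} than the order up to which $\eta$ is orthogonalized; this single ``degree of slack'' is exactly what yields the extra power of $h$. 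Throughout I use that $\eta$ is $tt$-continuous (as $\rho$ is smooth and $\rho_h\in\Regge_h^k$), so $\eta_{\tv\tv}$ is single-valued on each edge.

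For the volume term I would use \eqref{eq:cov_rotrot_coo} to write $\langle\rot\rot_g u_h,\eta\rangle\sqrt{\det g}=\eta:\Psi$ with matrix proxy $\Psi=(\det g)^{-1/2}[\rot\rot u_h]+(\det g)^{-1/2}(\Gamma\text{-terms})[\rot u_h]$. The leading part carries the degree-$(k-2)$ polynomial $[\rot\rot u_h]$, the lower-order part the degree-$(k-1)$ polynomial $[\rot u_h]$, each multiplied by an elementwise $W^{2,\infty}$-, resp.\ $W^{1,\infty}$-, bounded coefficient controlled by $\|g\|_{W^{2,\infty}_h}$ and $\|g^{-1}\|_{\Linf}$. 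By \eqref{eq:RefInt_trig}, $\int_T\eta:\phi\,\da=0$ for all $\phi\in\Pol^{k-1}(T,\R^{2\times2})$, so I may subtract any such $\phi$. For the lower-order part I replace the smooth coefficient by its centroid value (degree $0$), leaving total degree $k-1$; the remainder is $O(h)$ and the polynomial factor is bounded by $\|\nabla u_h\|_{\Ltwo[T]}$. For the leading part I replace the coefficient by its first-order Taylor polynomial, again leaving total degree $k-1$; the remainder is $O(h^2)$, and the degree-$(k-2)$ factor $[\rot\rot u_h]$ is absorbed via the inverse estimate \eqref{eq:discr_inverse_inequ}, $\|\nabla^2 u_h\|_{\Ltwo[T]}\lesssim h^{-1}\|\nabla u_h\|_{\Ltwo[T]}$. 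Both contributions are then $\lesssim h\,\|\nabla u_h\|_{\Ltwo[T]}\|\eta\|_{\Ltwo[T]}$; summing over $T$ and applying Cauchy--Schwarz gives $\lesssim h\,\nrm{\eta}_2\,\|u_h\|_{\Hone}$. (When $k=1$ the leading part is absent, since $\nabla^2 u_h\equiv0$.)

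For the boundary term I would rewrite each edge integral in Euclidean tangential-tangential form: since $\gt=\tv/\sqrt{g_{\tv\tv}}$ and $\vo{E}=\sqrt{g_{\tv\tv}}\,d\tv$, one edge contributes $\int_E\eta_{\tv\tv}\,(\gn^l\d_l u_h)\,g_{\tv\tv}^{-1}\,d\tv$. The factor multiplying $\eta_{\tv\tv}$ is the degree-$(k-1)$ polynomial $\gn^l\d_l u_h$ times a smooth coefficient. Invoking \eqref{eq:RefInt_edge}, $\int_E\eta_{\tv\tv}\,q\,\dl=0$ for all $q\in\Pol^{k}(E)$, I subtract the product of $\gn^l\d_l u_h$ with the first-order Taylor polynomial of the smooth coefficient---a degree-$k$ polynomial on $E$---leaving an $O(h^2)$ remainder. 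With the trace inequality \eqref{eq:trace_inequ}, $\|\eta\|_{\Ltwo[E]}\lesssim h^{-1/2}\nrm{\eta}_{2,T}$, and the discrete trace estimate \eqref{eq:discr_inverse_inequ}, $\|\nabla u_h\|_{\Ltwo[E]}\lesssim h^{-1/2}\|\nabla u_h\|_{\Ltwo[T]}$, one edge is bounded by $h^2\cdot h^{-1/2}\cdot h^{-1/2}\,\nrm{\eta}_{2,T}\|\nabla u_h\|_{\Ltwo[T]}=h\,\nrm{\eta}_{2,T}\|\nabla u_h\|_{\Ltwo[T]}$. Summing over elements (bounded edge overlap) and applying Cauchy--Schwarz completes the estimate.

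The main obstacle, and the crux of the whole argument, is securing the full factor $h$ rather than merely $O(1)$. This rests entirely on the ``one degree of slack'': because $\gn^l\d_l u_h$ has degree $k-1$ while the edge moments vanish up to degree $k$, the smooth coefficient can be expanded to \emph{first} order, producing an $h^2$ remainder that survives the two $h^{-1/2}$ trace factors; approximating the coefficient only by a constant would collapse the bound to $O(1)$. In the volume term the delicate case is the leading part, where the $h^2$ Taylor remainder must withstand the $h^{-1}$ loss from the inverse estimate on $\nabla^2 u_h$---that this still nets $h$ is what forces $k\ge1$ and the $W^{2,\infty}$-regularity of $g$. The remaining work, namely verifying that $(\det g)^{-1/2}$, the Christoffel symbols, and $\gn^l g_{\tv\tv}^{-1}$ are uniformly bounded in the stated norms and tracking the constant's dependencies, is routine given \eqref{eq:est_inv_by_ten}--\eqref{eq:bound_gt}.
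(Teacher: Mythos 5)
Your proposal is correct and follows essentially the same route as the paper's proof: split $(\drotrotop u_h)(\rho-\rho_h)$ via \eqref{eq:distr_cov_rotrot}, use the element moments \eqref{eq:RefInt_trig} to annihilate the degree-$(k-1)$ part (degree-one coefficient approximation against $[\rot\rot u_h]$ with the inverse estimate, degree-zero against the Christoffel part), and use the edge moments \eqref{eq:RefInt_edge} with a degree-one coefficient approximation plus the two trace inequalities for the boundary term. The only cosmetic differences are that you use elementwise/edgewise Taylor expansions where the paper uses $\Ltwo$-projections and co-projections $\LtwoInt[1,\perp]$, $\LtwoInt[0,\perp]$, $\LtwoInt[E,1,\perp]$ (equivalent up to constants), and a small notational slip in calling $\gn^l\d_l u_h$ a polynomial --- the $g$-dependent factor $\gn^l$ must be absorbed into the smooth coefficient (the paper's $H(g)$), exactly as your final paragraph in fact does.
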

\begin{proof}
	First, consider the element terms of \eqref{eq:distr_cov_rotrot}. Comparing with coordinate expressions \eqref{eq:cov_rotrot_coo} and \eqref{eq:coo_volforms}
	we can find smooth functions 
	\begin{align*}
		F(g)=\frac{1}{\sqrt{\det g}},\qquad \mt{G(g)}^{ij}_k=\frac{1}{\sqrt{\det g}}\veps^{jq}\,\left(\Gamma_{qk}^i(g)-\Gamma_{lq}^l(g)\,\delta_k^i\right),
	\end{align*} 
	such that
\begin{align*}
	\int_T \langle \rot\rot_g u_h,\rho-\rho_h\rangle\,\vo{T} &= \int_T \mt{\rho-\rho_h}_{ij}\left(F(g)\mt{\rot\rot u_h}^{ij}+\mt{G(g)}_k^{ij}\mt{\rot u_h}^k\right)\,\da\\
	& = \int_T \mt{\rho-\rho_h}_{ij}\mt{\rot\rot u_h}^{ij}\left((\LtwoInt[1]+\LtwoInt[1,\perp])F(g)\right) \\
	&\qquad+ \mt{\rho-\rho_h}_{ij}\mt{\rot u_h}^{k}\left( (\LtwoInt[0]+\LtwoInt[0,\perp])\mt{G(g)}^{ij}_k\right)\,\da\\
                                                                                   & = \int_T \mt{\rho-\rho_h}_{ij}\mt{\rot\rot u_h}^{ij}\LtwoInt[1,\perp]\left(F(g)\right)
  \\
  & \qquad + \mt{\rho-\rho_h}_{ij}\mt{\rot u_h}^k \LtwoInt[0,\perp]\left(\mt{G(g)}_k^{ij}\right)\,\da\\
	&\leq C h_T^2\|\rho-\rho_h\|_{\Ltwo[T]}|u_h|_{\Htwo[T]} + h_T\|\rho-\rho_h\|_{\Ltwo[T]}\|u_h\|_{\Hone[T]}\\
	&\leq C h_T\|\rho-\rho_h\|_{\Ltwo[T]}\|u_h\|_{\Hone[T]}.
\end{align*}
Above, we split the nonlinear terms using $\Ltwo$-projections $\LtwoInt[k]$ and co-projections $\LtwoInt[k,\perp]$ onto elementwise polynomials, and used their approximation property. Further, we exploited that the first $k-1$ moments of $\rho-\rho_h$ are zero \eqref{eq:RefInt_trig} and used inverse inequality \eqref{eq:discr_inverse_inequ}.

Next, we focus on the element-boundary terms of \eqref{eq:distr_cov_rotrot}. With the coordinate expressions for the $g$-normalized tangent and normal vector, see e.g. \cite{GNSW2023} with the Euclidean vectors $(\tv,\nv)$ and the notation $g^{\nv\nv}=g^{ij}\nv_i\nv_j$
\[
\gt^i = \frac{1}{\sqrt{g_{\tv\tv}}}\tv^i,\qquad \gn^i = \frac{g^{ij}\nv_j}{\sqrt{g^{\nv\nv}}}
\] we collect all terms depending on $g$ in the nonlinear function 
\begin{align*}
	\mt{H(g)}^i= \frac{1}{\sqrt{g_{\tv\tv}\,g^{\nv\nv}}}g^{ij}\nv_j.	
\end{align*}
 We split $H$ with the edgewise $\Ltwo$-interpolant $\LtwoInt[E,k]$. Then, we use H\"older inequality, as well as the trace inequalities \eqref{eq:trace_inequ} and \eqref{eq:discr_inverse_inequ} to obtain
\begin{align*}
	\int_{\d T} (\rho-\rho_h)(\gt,\gt)\langle \nabla u_h,\gn\rangle\,\vo{\d T} &= \int_{\d T}(\rho-\rho_h)(\tv,\tv) \d_iu_h\,\left((\LtwoInt[E,1]+\LtwoInt[E,1,\perp])\mt{H(g)}^i\right)\,\dl\\
	&\leq C h_T^2\, \|\rho-\rho_h\|_{\Ltwo[\d T]}\|\nabla u_h\|_{\Ltwo[\d T]}\|g\|_{W^{2,\infty}(\d T)}\\
	&\leq  C h_T \nrm{\rho-\rho_h}_{2,T}\|u_h\|_{\Hone[T]}.
\end{align*}
Summing over all elements $T\in\T$ finishes the proof.
\end{proof}

Due to Lemma~\ref{lem:inc_rotrot_adjoint} we obtain as a byproduct the convergence of the distributional covariant incompatibility operator.
\begin{corollary}
	Under the assumptions of Proposition~\ref{prop:conv_rotrot}, there holds
	\begin{align*}
		\left|\left(\dincop(\sigma-\sigma_h)\right)(u_h)\right|\leq C h\nrm{\sigma-\sigma_h}_2\|u_h\|_{\Hone},
	\end{align*} 
	where the constant $C>0$ depends on $\Omega$, the mesh regularity, $k$, $\|g\|_{W^{2,\infty}_h}$, and $\|g^{-1}\|_{\Linf}$.
\end{corollary}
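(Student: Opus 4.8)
The plan is to reduce this statement directly to Proposition~\ref{prop:conv_rotrot} by invoking the adjoint identity of Lemma~\ref{lem:inc_rotrot_adjoint}. First I would verify that the hypotheses of that lemma are satisfied for the pair $(\sigma-\sigma_h,\,u_h)$. Since $\sigma$ is globally defined and smooth enough (it lies in $W^{s,p}(\Omega,\Sc)$ with $s>1/p$), it is single-valued across interfaces and hence trivially $tt$-continuous, so $\sigma\in\Regge(\T)$; meanwhile $\sigma_h=\RegInt[k]\sigma\in\Regge_h^k\subset\Regge(\T)$, whence $\sigma-\sigma_h\in\Regge(\T)$. Similarly $u_h\in\Vo_h^k\subset\VV(\T)$. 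With these memberships in hand, Lemma~\ref{lem:inc_rotrot_adjoint} applies and delivers the identity
\[
  (\dincop(\sigma-\sigma_h))(u_h)=(\drotrotop u_h)(\sigma-\sigma_h).
\]

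From here I would simply take absolute values and apply Proposition~\ref{prop:conv_rotrot} with the choice $\rho=\sigma$, so that $\rho_h=\RegInt[k]\sigma=\sigma_h$, directly to the right-hand side. This yields
\[
  \left|(\dincop(\sigma-\sigma_h))(u_h)\right|
  =\left|(\drotrotop u_h)(\sigma-\sigma_h)\right|
  \leq C\,h\,\nrm{\sigma-\sigma_h}_2\,\|u_h\|_{\Hone},
\]
and the dependence of $C$ on $\Omega$, the mesh regularity, $k$, $\|g\|_{W^{2,\infty}_h}$, and $\|g^{-1}\|_{\Linf}$ is inherited verbatim from the proposition.

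Because the argument consists of nothing more than substituting into an already-established identity and then quoting a previously proved estimate, I do not anticipate any genuine obstacle. The only point that warrants a moment's care is the verification that $\sigma-\sigma_h$ indeed belongs to $\Regge(\T)$, so that the adjoint relation of Lemma~\ref{lem:inc_rotrot_adjoint} is legitimately applicable; once that membership is noted, the estimate is immediate. In effect, this corollary records the fact that the convergence bound for $\drotrotop$ transfers losslessly to $\dincop$ through the exact distributional adjointness.
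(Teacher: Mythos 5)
Your proposal is correct and is exactly the paper's argument: the paper derives this corollary as an immediate byproduct of Lemma~\ref{lem:inc_rotrot_adjoint} applied to $(\sigma-\sigma_h,u_h)$ followed by Proposition~\ref{prop:conv_rotrot} with $\rho=\sigma$, $\rho_h=\sigma_h$. Your additional check that $\sigma-\sigma_h\in\Regge(\T)$ and $u_h\in\VV(\T)$ is a careful (and harmless) elaboration of what the paper leaves implicit.
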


\subsection{Proof of Theorem~\ref{thm:improved_rate_Hm1}}
We are now in position to prove our main theorem. The proof strategy is inspired by the proofs of \cite[Theorem 6.5]{GNSW2023} and \cite[Theorem 4.1]{Gaw20}.
\begin{proof}[Proof of Theorem~\ref{thm:improved_rate_Hm1}]
	We start with the definition of the $\Hmone$-norm noting that $\Gaussh\volformh$ and $\Kex\volformex$ are square integrable
	\begin{align*}
		\|\Gaussh\volformh-\Kex\volformex\|_{\Hmone}=\sup\limits_{u\in\Honez[\Omega]}\frac{(\Gaussh\volformh-\Kex\volformex,u)_{\Ltwo}}{\|u\|_{\Hone}}.
	\end{align*}
Next, we add and subtract the $\Ltwo$-orthogonal interpolant \eqref{eq:proj_l2_orth} $u_h:= \LtwoProj[k] u$, $\LtwoProj[k]:\Ltwo(\Omega)\to \Vo_h^k$, to split the error into three parts
\begin{align*}
	(\Gaussh\volformh-\Kex\volformex,u)_{\Ltwo} &= (\Gaussh\volformh-\Kex\volformex,u_h)_{\Ltwo} + (\Gaussh\volformh-\Kex\volformex,u-u_h)_{\Ltwo}\\
	&=(\Gaussh\volformh-\Kex\volformex,u_h)_{\Ltwo}+ (\Gaussh-\Kex,u-u_h)_{{\Ltwo},\gex}+(\Gaussh(\volformh-\volformex),u-u_h)_{\Ltwo}\\
	&=:s_1+s_2+s_3.
\end{align*}
We use \eqref{eq:lifted_distr_Gauss}, the integral representation of the error \eqref{eq:int_repr_error} with $\gpar(t)= \gex + t(\gappr-\gex)$ and $\sigma=\gpar^\prime(t) = \gappr-\gex$, and the adjoint of the distributional incompatibility operator Lemma~\ref{lem:inc_rotrot_adjoint}
\begin{align*}
	s_1&= \left(\Kog(\gappr)-\Kex\volformex\right)(u_h) = -\frac{1}{2}\int_0^1 \left(\dincop[\gpar(t)]\sigma\right)(u_h)\,dt = -\frac{1}{2}\int_0^1\left(\drotrotop[\gpar(t)]u_h\right)(\sigma)\,dt.
\end{align*}
From Proposition~\ref{prop:conv_rotrot} (setting $g=\gpar(t)$ and $\rho=-\gex$) we obtain together with the $\Hone$-stability \eqref{eq:proj_h1_stab}
\begin{align}
	 \label{eq:estimate_s1}
	|s_1|\lesssim h\nrm{\sigma}_2\|u_h\|_{\Hone} \lesssim h\nrm{\gappr-\gex}_2\|u\|_{\Hone}.
\end{align}
For $s_2$ we use the definition of the $\Ltwo$-orthogonal interpolant \eqref{eq:proj_l2_orth} $u_h=\LtwoProj[k]u$, Cauchy-Schwarz inequality, and the approximation property \eqref{eq:proj_approx} of $u_h$. For arbitrary $v_h\in \Vo_h^k$ there holds
\begin{align*}
	(\Gaussh-\Kex,u-u_h)_{{\Ltwo},\gex}&=(v_h-\Kex,u-u_h)_{{\Ltwo},\gex}\lesssim \|v_h-\Kex\|_{\Ltwo}\|u-u_h\|_{\Ltwo}\lesssim h\|v_h-\Kex\|_{\Ltwo}\|u\|_{\Hone}
\end{align*}
and thus,
\begin{align*}
	|s_2|\lesssim h\inf\limits_{v_h\in\Vo_h^k}\|v_h-\Kex\|_{\Ltwo}\|u\|_{\Hone}.
\end{align*}

Before we turn to the third term $s_3$ we show that the lifted Gauss curvature $\Gaussh$ is bounded in the $\Ltwo$-norm by using $\Gaussh\in\Vo_h^k$ instead of $u_h$ in estimate \eqref{eq:estimate_s1} 
\begin{align*}
	\|\Gaussh\|^2_{\Ltwo}&\lesssim (\Gaussh\volformh,\Gaussh)_{\Ltwo}=(\Gaussh\volformh-\Kex\volformex,\Gaussh)_{\Ltwo}+ (\Kex,\Gaussh)_{\Ltwo,\gex}\\
	&\lesssim h\|\gappr-\gex\|_{\Ltwo}\|\Gaussh\|_{\Hone}+\|\Kex\|_{\Ltwo}\|\Gaussh\|_{\Ltwo}\\
	&\lesssim \big(\|\gappr-\gex\|_{\Ltwo}+\|\Kex\|_{\Ltwo}\big)\|\Gaussh\|_{\Ltwo}.
\end{align*}
For the last inequality we used the inverse estimate \eqref{eq:discr_inverse_inequ}. Dividing by $\|\Gaussh\|_{\Ltwo}$ yields the boundedness of $\Gaussh$.

Using H\"older inequality, the approximation property \eqref{eq:proj_approx} of $u_h$, inequality \eqref{eq:est_inv_by_ten}, and that $\|\Gaussh\|_{\Ltwo}\lesssim 1$ yields the following estimate for $s_3$
\begin{align*}
	|s_3|\leq \|\Gaussh\|_{\Ltwo}\|\volformh-\volformex\|_{\Linf}\|u-u_h\|_{\Ltwo} \lesssim h\|\gappr-\gex\|_{\Linf}\|u\|_{\Hone}.
\end{align*}
Combining all results yields 
\begin{align*}
  \|\Gaussh\volformh-\Kex\volformex\|_{\Hmone}
&=\sup\limits_{u\in\Honez[\Omega]}\frac{(\Gaussh\volformh-\Kex\volformex,u)_{\Ltwo}}{\|u\|_{\Hone}}
  \\
  &\lesssim h(\nrm{\gappr-\gex}_2+\inf\limits_{v_h\in\Vo_h^k}\|v_h-\Kex\|_{\Ltwo}+\|\gappr-\gex\|_{\Linf}).
\end{align*}
Using standard interpolation techniques we obtain the desired convergence rate for $0\leq l\leq k+1$
\begin{align*}
	\|\Gaussh\volformh-\Gauss\volform\|_{\Hmone}\lesssim h^{l+1}\,\big(\|\gex\|_{W^{l,\infty}}+\|\Kex\|_{H^{l}}\big).
\end{align*}
\end{proof}

\subsection{Analysis of the lifting of pure Gauss curvature}
\label{subsec:ana_pure_Gauss}
To relate the error of the Gauss curvature with the densitized Gauss curvature we need the following result.
\begin{lemma}
	Under the assumptions of Theorem~\ref{thm:improved_rate_Hm1}, there holds for $0\leq l\leq k+1$
	\label{lem:approx_diff_volforms}
	\begin{align*}
			\| \Gaussh(\volformh-\volformex)\|_{\Hmone}&\lesssim h\,\|\gappr-\gex\|_{\Linf}\|\Gaussh\|_{\Honeh}\\
		&\lesssim h^{l+1}\|\gex\|_{W^{l,\infty}}\|\Gaussh\|_{\Honeh}.
	\end{align*}
\end{lemma}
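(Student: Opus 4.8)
The plan is to estimate the $\Hmone$-norm by duality and to exploit that the volume-form difference $w:=\sqrt{\det\gappr}-\sqrt{\det\gex}$ has elementwise means that are one order smaller than one would naively expect, thanks to the moment property~\eqref{eq:RefInt_trig} of the canonical Regge interpolant. Writing $\|\Gaussh(\volformh-\volformex)\|_{\Hmone}=\sup_{u\in\Honez[\Omega]}(\Gaussh w,u)_{\Ltwo}/\|u\|_{\Hone}$ and denoting by $\LtwoInt[0]$ the elementwise $\Ltwo$-projection onto piecewise constants, I would split, for fixed $u$,
\begin{align*}
(\Gaussh w,u)_{\Ltwo}=\underbrace{(\Gaussh w,u-\LtwoInt[0]u)_{\Ltwo}}_{T_1}+\underbrace{((\Gaussh-\LtwoInt[0]\Gaussh)\,w,\LtwoInt[0]u)_{\Ltwo}}_{T_2}+\underbrace{(\LtwoInt[0]\Gaussh\,w,\LtwoInt[0]u)_{\Ltwo}}_{T_3}.
\end{align*}

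The first two terms are routine. For $T_1$ I use $\|u-\LtwoInt[0]u\|_{\Ltwo}\lesssim h\|u\|_{\Hone}$ and $\|\Gaussh\|_{\Ltwo}\le\|\Gaussh\|_{\Honeh}$ to get $|T_1|\lesssim h\|w\|_{\Linf}\|\Gaussh\|_{\Honeh}\|u\|_{\Hone}$; for $T_2$ I use $\|\Gaussh-\LtwoInt[0]\Gaussh\|_{\Ltwo}\lesssim h\|\Gaussh\|_{\Honeh}$ together with the $\Ltwo$-stability $\|\LtwoInt[0]u\|_{\Ltwo}\lesssim\|u\|_{\Hone}$, giving the same bound. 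In both I control $\|w\|_{\Linf}=\|\sqrt{\det\gappr}-\sqrt{\det\gex}\|_{\Linf}\lesssim\|\gappr-\gex\|_{\Linf}$ by~\eqref{eq:est_inv_by_ten}.

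The crux is $T_3$, the ``low--low'' interaction, where no factor of $h$ is visible a priori since both discrete factors retain full $\Ltwo$-size; this is the main obstacle. The key step is to show that the elementwise mean of $w$ beats the naive estimate $|\int_T w|\le|T|\,\|w\|_{\Linf}\sim h^2$ by one order:
\begin{align*}
\Big|\int_T w\,\da\Big|\lesssim h^2\,\|\gappr-\gex\|_{\Ltwo[T]}\lesssim h^3\,\|\gappr-\gex\|_{\Linf}.
\end{align*}
To prove this I linearize via the fundamental theorem of calculus and Jacobi's formula $\partial_g\sqrt{\det g}=\tfrac12\sqrt{\det g}\,g^{-1}$, obtaining $\int_T w\,\da=\tfrac12\int_0^1\int_T\sqrt{\det\gpar(t)}\,\gpar(t)^{ij}(\gappr-\gex)_{ij}\,\da\,dt$ with $\gpar(t)=\gex+t(\gappr-\gex)$. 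Subtracting the (symmetric, constant) element average $\LtwoInt[0]\big(\sqrt{\det\gpar(t)}\,\gpar(t)^{-1}\big)\in\Pol^0\subseteq\Pol^{k-1}$ — which is admissible precisely because $k\ge1$ — the moment property~\eqref{eq:RefInt_trig} annihilates the constant part, and a Poincaré estimate bounds the remaining co-projection by $h^2$, using $\|\sqrt{\det\gpar(t)}\,\gpar(t)^{-1}\|_{W^{1,\infty}_h}\lesssim1$ from~\eqref{eq:bound_gt}. This is exactly where the canonical interpolant and the restriction $k\ge1$ enter.

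Feeding this into $T_3$, the two averaging factors each cost $h^{-1}$, via $\|\LtwoInt[0]v\|_{\Linf[T]}\lesssim h^{-1}\|\LtwoInt[0]v\|_{\Ltwo[T]}$ on a single element, which is exactly compensated by the $h^2$ gained on $\int_T w$; a Cauchy--Schwarz over the elements together with $\|\gappr-\gex\|_{\Ltwo[T]}\lesssim h\|\gappr-\gex\|_{\Linf}$ and the $\Ltwo$-stability of $\LtwoInt[0]$ then yields $|T_3|\lesssim h\|\gappr-\gex\|_{\Linf}\|\Gaussh\|_{\Ltwo}\|u\|_{\Ltwo}$. Collecting $T_1,T_2,T_3$, dividing by $\|u\|_{\Hone}$ and taking the supremum proves the first inequality. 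The second inequality follows immediately from the interpolation estimate $\|\gappr-\gex\|_{\Linf}=\|(\idop-\RegInt[k])\gex\|_{\Linf}\lesssim h^l\|\gex\|_{W^{l,\infty}}$ of~\eqref{eq:reg_appr_lo_vol} with $p=\infty$, $r=0$.
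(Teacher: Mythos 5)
Your proof is correct, and it reaches the estimate by a genuinely different organization than the paper, although both arguments hinge on the same key mechanism: the orthogonality of $\gappr-\gex=(\RegInt[k]-\idop)\gex$ to elementwise \emph{constant} symmetric matrices from~\eqref{eq:RefInt_trig} (which is exactly where $k\ge 1$ enters), combined with an $\mathcal{O}(h)$ co-projection bound. The paper works with a single splitting: it factors the volume-form difference \emph{algebraically} as $\volformh-\volformex=\mt{\gappr-\gex}_{ij}\mt{F(\gappr,\gex)}^{ij}$ for an explicit smooth matrix function $F$, then inserts $\LtwoInt[0]+\LtwoInt[0,\perp]$ around the \emph{entire} product $\mt{F}^{ij}\Gaussh u$, kills the $\LtwoInt[0]$ part by~\eqref{eq:RefInt_trig}, and bounds the co-projection in $L^1$ by $h\,\|\Gaussh\|_{\Honeh}\|u\|_{\Hone}$; this is shorter but requires writing down $F$ by hand. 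You instead peel off the projection errors of $u$ and $\Gaussh$ separately (your $T_1$, $T_2$, which need only $\|w\|_{\Linf}\lesssim\|\gappr-\gex\|_{\Linf}$ and no orthogonality), and concentrate the Regge moment property in the low--low term $T_3$, where you prove the clean, reusable fact that elementwise means superconverge, $\bigl|\int_T w\,\da\bigr|\lesssim h^2\|\gappr-\gex\|_{\Ltwo[T]}$, using the Jacobi-formula linearization $w=\tfrac12\int_0^1\sqrt{\det\gpar(t)}\,\gpar(t)^{ij}(\gappr-\gex)_{ij}\,dt$ in place of the paper's hand-crafted factorization. The price of your route is the extra inverse-estimate bookkeeping ($\|\LtwoInt[0]v\|_{\Linf[T]}\lesssim h^{-1}\|\LtwoInt[0]v\|_{\Ltwo[T]}$, compensated by the $h^2$ gain), and a one-line chain-rule verification that $\|\sqrt{\det\gpar(t)}\,\gpar(t)^{-1}\|_{W^{1,\infty}_h}\lesssim 1$, which follows from~\eqref{eq:bound_gt} but is not literally stated there; the benefit is that the argument generalizes immediately to any nonlinear function of the metric whose derivative is smooth in $g$, without constructing an exact difference factorization. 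One shared blemish: for the final display, \eqref{eq:reg_appr_lo_vol} with $p=\infty$ requires $l>0$, so the endpoint $l=0$ needs a separate (trivial) boundedness argument — but the paper's own proof is no more careful on this point.
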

\begin{proof}
	By noting that $\volformh-\volformex = \mt{\gappr-\gex}_{ij} \mt{F(\gappr,\gex)}^{ij}$ with the smooth function
	\begin{align*}
	\mt{F(\gappr,\gex)} = \frac{1}{\sqrt{\det \gappr}+\sqrt{\det \gex}}\begin{bmatrix}
		(\gappr)_{22} & \frac{1}{2}(\gex+\gappr)_{12}\\
		\frac{1}{2}(\gex+\gappr)_{12} & \gex_{11}
	\end{bmatrix},
	\end{align*} 
	and using that for $k\geq1$ the constant moment of the difference $\gappr-\gex$ is zero due to \eqref{eq:RefInt_trig}, we obtain
\begin{align*}
	(\Gaussh(\volformh-\volformex),u)_{\Ltwo}&=\int_{\Omega}\mt{\gappr-\gex}_{ij} \mt{F(\gappr,\gex)}^{ij}\Gaussh u\,\da \\
	&=\int_{\Omega}\mt{\gappr-\gex}_{ij} \left((\LtwoInt[0]+\LtwoInt[0,\perp])\left(\mt{F(\gappr,\gex)}^{ij}\Gaussh u\right)\right)\,\da\\
	&=\int_{\Omega}\mt{\gappr-\gex}_{ij} \LtwoInt[0,\perp]\left(\mt{F(\gappr,\gex)}^{ij}\Gaussh u\right)\,\da\\
	&\leq \|\gappr-\gex\|_{\Linf}\|\LtwoInt[0,\perp]\left(F(\gappr,\gex)\Gaussh u\right)\|_{L^1}\\
	&\lesssim h \|\gappr-\gex\|_{\Linf} \|\Gaussh\|_{\Honeh}\|u\|_{\Hone},
\end{align*}
finishing the proof.
\end{proof}

By Lemma~\ref{lem:approx_diff_volforms}, inverse estimate \eqref{eq:discr_inverse_inequ}, and boundedness of $\Gaussh$ in $\Ltwo$, $\|\Gaussh\|_{\Honeh}\lesssim h^{-1}\|\Gaussh\|_{\Ltwo}\lesssim h^{-1}$, we can deduce a suboptimal convergence rate of the error of the pure lifted Gauss curvature
\begin{align}
	\label{eq:error_lift_curv1}
	\begin{split}
		\|\Gaussh-\Kex\|_{\Hmone}&\lesssim \|\Gaussh\volformex-\Kex\volformex\|_{\Hmone}\\
		&\leq  \|\Gaussh\volformh-\Kex\volformex\|_{\Hmone} +\|\Gaussh(\volformex-\volformh)\|_{\Hmone}\\
		&\lesssim h\big(\nrm{\gappr-\gex}_2+\inf\limits_{v_h\in\Vo_h^k}\|v_h-\Kex\|_{\Ltwo}+\|\gappr-\gex\|_{\Linf}\big)+h\,\|\gappr-\gex\|_{\Linf}\|\Gaussh\|_{\Honeh}\\
		&\lesssim h^{l+1}\big(\|\gex\|_{W^{l+1,\infty}}+\|\Kex\|_{H^l}\big),\qquad\qquad 0\leq l\leq k \text{ (not $k+1$)}.
	\end{split}
\end{align}
To correct the convergence of the lifted Gauss curvature and to prove optimal rates for the (densitized) lifted Gauss curvature in stronger Sobolev norms we consider a bootstrapping-like technique. First, we can easily adapt the proof of \cite[p. 1818]{Gaw20} and \cite[Corollary 6.6]{GNSW2023} to deduce for $0\leq l \leq k$ and $0\leq r\leq l$
\begin{align*}
		\|\Gaussh-\Kex\|_{H^r_h}&\lesssim h^{-r}\big(\nrm{\gappr-\gex}_2+h^{-1}\|\gappr-\gex\|_{\Linf}+\inf\limits_{v_h\in\Vo_h^k}\|v_h-\Gauss\|_{\Ltwo}+h^l|\Kex|_{H^l}\big)\\
	&\lesssim h^{l-r}\big(\|\gex\|_{W^{l+1,\infty}}+\|\Kex\|_{H^l}\big)
\end{align*}
and therefore the boundedness in the elementwise $\Hone$-norm for $l=r=1$ and $k\geq 1$, 
$$\|\Gaussh\|_{\Honeh}\leq \|\Gaussh-\Kex\|_{\Honeh} + \|\Kex\|_{\Hone} \lesssim \|\gex\|_{W^{2,\infty}}+\|\gex\|_{\Hone}+\|\Kex\|_{\Hone}\lesssim 1.$$
Thus, instead of using the inverse inequality in \eqref{eq:error_lift_curv1} we directly obtain the improved convergence rate:

\begin{theorem}
	\label{thm:improved_rate_Hm1_pure}
	Under the assumptions of Theorem~\ref{thm:improved_rate_Hm1}, there holds for $0\leq l \leq k+1$
	\begin{align*}
			\|\Gaussh-\Kex\|_{\Hmone}
			&\lesssim h\big(\nrm{\gappr-\gex}_2+\inf\limits_{v_h\in\Vo_h^k}\|v_h-\Kex\|_{\Ltwo}+\|\gappr-\gex\|_{\Linf}\big)\\
			&\lesssim h^{l+1}\big(\|\gex\|_{W^{l,\infty}}+\|\Kex\|_{H^{l}}\big).
	\end{align*}
\end{theorem}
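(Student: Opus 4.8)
The plan is to reduce the pure-curvature error to the densitized error, already controlled by Theorem~\ref{thm:improved_rate_Hm1}, plus a volume-form defect, and then to sharpen the defect bound by replacing the wasteful inverse inequality used in~\eqref{eq:error_lift_curv1} with the uniform $\Honeh$-bound on $\Gaussh$ supplied by the preceding bootstrap. First I would note that $\volformex=\sqrt{\det\gex}\,dx^1\wedge dx^2$ is bounded above and below by constants depending only on $\gex$, so multiplication by $\volformex$ is an $\Hmone$-equivalence and $\|\Gaussh-\Kex\|_{\Hmone}\lesssim\|(\Gaussh-\Kex)\volformex\|_{\Hmone}$. Writing $(\Gaussh-\Kex)\volformex=(\Gaussh\volformh-\Kex\volformex)-\Gaussh(\volformh-\volformex)$ and using the triangle inequality yields
\begin{align*}
\|\Gaussh-\Kex\|_{\Hmone}\lesssim\|\Gaussh\volformh-\Kex\volformex\|_{\Hmone}+\|\Gaussh(\volformh-\volformex)\|_{\Hmone}.
\end{align*}
The first term is controlled directly by Theorem~\ref{thm:improved_rate_Hm1}, and the second by Lemma~\ref{lem:approx_diff_volforms}, which gives $\|\Gaussh(\volformh-\volformex)\|_{\Hmone}\lesssim h\|\gappr-\gex\|_{\Linf}\|\Gaussh\|_{\Honeh}$.

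The crucial step, and the main obstacle, is sharply controlling the factor $\|\Gaussh\|_{\Honeh}$. In the suboptimal chain~\eqref{eq:error_lift_curv1} this factor was bounded through the inverse inequality~\eqref{eq:discr_inverse_inequ} by $h^{-1}\|\Gaussh\|_{\Ltwo}\lesssim h^{-1}$, which costs one power of $h$ and is exactly what forces both the restricted range $l\leq k$ and the loss of one derivative ($W^{l+1,\infty}$ rather than $W^{l,\infty}$). Instead, I would invoke the uniform bound $\|\Gaussh\|_{\Honeh}\lesssim 1$ established just above the theorem: this follows from the intermediate $H^r_h$ estimate specialized to $l=r=1$ (which needs $k\geq 1$ and the $\Ltwo$-boundedness of $\Gaussh$), combined with $\|\Kex\|_{\Hone}\lesssim 1$ via $\|\Gaussh\|_{\Honeh}\leq\|\Gaussh-\Kex\|_{\Honeh}+\|\Kex\|_{\Hone}$. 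With this replacement, the volume-form defect contributes only $h\|\gappr-\gex\|_{\Linf}$, and combining with Theorem~\ref{thm:improved_rate_Hm1} delivers the first asserted estimate with no loss of range or regularity.

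Finally, to pass from the first inequality to the stated convergence rates, I would insert the standard interpolation bounds on the full range $0\leq l\leq k+1$. By~\eqref{eq:reg_appr_lo_vol} applied to $\gappr-\gex=(\idop-\RegInt[k])\gex$, both $\|\gappr-\gex\|_{\Linf}$ and $\nrm{\gappr-\gex}_2$ are $\lesssim h^{l}\|\gex\|_{W^{l,\infty}}$ (for the triple-bar norm, the $\Ltwo$ part gives $h^{l}$ and the scaled $\Honeh$ part gives $h\cdot h^{l-1}=h^{l}$). For the best-approximation term, since $\Kex=0$ on $\d\Omega$ the Dirichlet-constrained approximation from $\Vo_h^k$ retains the full rate, $\inf_{v_h\in\Vo_h^k}\|v_h-\Kex\|_{\Ltwo}\lesssim h^{l}\|\Kex\|_{H^{l}}$. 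Multiplying through by the prefactor $h$ yields $\|\Gaussh-\Kex\|_{\Hmone}\lesssim h^{l+1}(\|\gex\|_{W^{l,\infty}}+\|\Kex\|_{H^{l}})$, which is the second claimed bound.
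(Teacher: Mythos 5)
Your proposal is correct and takes essentially the same route as the paper: the identical splitting $(\Gaussh-\Kex)\volformex=(\Gaussh\volformh-\Kex\volformex)-\Gaussh(\volformh-\volformex)$ handled by Theorem~\ref{thm:improved_rate_Hm1} and Lemma~\ref{lem:approx_diff_volforms}, with the wasteful inverse-inequality bound on $\|\Gaussh\|_{\Honeh}$ in \eqref{eq:error_lift_curv1} replaced by the bootstrap bound $\|\Gaussh\|_{\Honeh}\lesssim 1$ (valid for $k\geq 1$). Your concluding interpolation estimates, including using $\Kex=0$ on $\d\Omega$ to retain the full rate for the best-approximation term from $\Vo_h^k$, likewise match the paper's argument.
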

This yields also improved rates in stronger norms for the lifted Gauss curvature:
\begin{corollary}
\label{cor:improved_conv_curvature}
Under the assumptions of Theorem~\ref{thm:improved_rate_Hm1}, there holds for all $0\leq l\leq k+1$ and $0\leq r\leq l$
\begin{align*}
		\|\Gaussh-\Kex\|_{H^r_h}&\lesssim h^{-r}\big(\|\gappr-\gex\|_{\Linf[]}+\nrm{\gappr-\gex}_2+\inf\limits_{v_h\in\Vo_h^{k}}\|v_h-\Kex\|_{\Ltwo}+h^{l}|\Kex|_{H^{l}}\big)\\
	&\lesssim h^{l-r}\big(\|\gex\|_{W^{l,\infty}}+\|\Kex\|_{H^{l}}\big).
\end{align*}
\end{corollary}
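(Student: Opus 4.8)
The plan is to obtain the $H^r_h$-estimates from the improved $\Hmone$-bound of Theorem~\ref{thm:improved_rate_Hm1_pure} by a duality-bootstrap: I would climb from the negative norm up to $\Ltwo$ and then to $H^r_h$ using inverse inequalities on finite element functions, taking care to recover one full power of $h$ in the negative-norm estimate of the projection error by exploiting the orthogonality of the $\Ltwo$-projection. Abbreviate $A := \|\gappr-\gex\|_{\Linf}+\nrm{\gappr-\gex}_2+\inf_{w_h\in\Vo_h^k}\|w_h-\Kex\|_{\Ltwo}$, so that Theorem~\ref{thm:improved_rate_Hm1_pure} reads $\|\Gaussh-\Kex\|_{\Hmone}\lesssim hA$.

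Concretely, I would set $v_h = \LtwoProj[k]\Kex$ and $P = \Gaussh - v_h = (\Gaussh-\Kex)+(\Kex-v_h) \in \Vo_h^k$. Since $P$ is a continuous finite element function, I first bound $\|P\|_{\Hmone}$ by the triangle inequality: the part $\|\Gaussh-\Kex\|_{\Hmone}\lesssim hA$ is Theorem~\ref{thm:improved_rate_Hm1_pure}, while for $\|\Kex-v_h\|_{\Hmone}$ I would use the Aubin--Nitsche duality that the $\Ltwo$-projection affords, namely $\|\Kex-\LtwoProj[k]\Kex\|_{\Hmone}\lesssim h\,\|\Kex-\LtwoProj[k]\Kex\|_{\Ltwo}\lesssim h^{l+1}|\Kex|_{H^l}$, which follows from the orthogonality \eqref{eq:proj_l2_orth} (pairing against $\phi-\LtwoProj[k]\phi$ for $\phi\in\Honez$, with weighted and unweighted $\Ltwo$-norms equivalent) and the approximation estimate \eqref{eq:proj_approx}. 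Hence $\|P\|_{\Hmone}\lesssim hA + h^{l+1}|\Kex|_{H^l}$.

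Next I would pass to $\Ltwo$ via the finite element inverse estimate $\|P\|_{\Ltwo}^2 = (P,P)_{\Ltwo}\le \|P\|_{\Hmone}\|P\|_{\Hone}\lesssim h^{-1}\|P\|_{\Hmone}\|P\|_{\Ltwo}$ (using the global inverse inequality from \eqref{eq:discr_inverse_inequ} for the continuous function $P$), which gives $\|P\|_{\Ltwo}\lesssim h^{-1}\|P\|_{\Hmone}\lesssim A + h^l|\Kex|_{H^l}$. Applying the elementwise inverse inequality \eqref{eq:discr_inverse_inequ} then yields $\|P\|_{H^r_h}\lesssim h^{-r}\|P\|_{\Ltwo}$, and the triangle inequality together with the optimal $H^r_h$-approximation of the $\Ltwo$-projection, $\|\Kex-v_h\|_{H^r_h}\lesssim h^{l-r}|\Kex|_{H^l}$ (itself obtained from $\Ltwo$- and $\Hone$-stability and an inverse inequality), gives the first asserted bound $\|\Gaussh-\Kex\|_{H^r_h}\lesssim h^{-r}(A + h^l|\Kex|_{H^l})$. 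The second bound follows by inserting the Regge interpolation estimate \eqref{eq:reg_appr_lo_vol} (so that $\|\gappr-\gex\|_{\Linf}+\nrm{\gappr-\gex}_2\lesssim h^l\|\gex\|_{W^{l,\infty}}$) and the best-approximation estimate $\inf_{w_h}\|w_h-\Kex\|_{\Ltwo}\lesssim h^l|\Kex|_{H^l}$.

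The main obstacle is sharpness in the step from $\Hmone$ to $\Ltwo$: the factor $h^{-1}$ coming from the inverse estimate must be absorbed by a full extra power of $h$ in $\|\Kex-v_h\|_{\Hmone}$, which is available \emph{only} because $v_h$ is the $\Ltwo$-projection and Aubin--Nitsche applies; a generic quasi-interpolant would leave $\|\Kex-v_h\|_{\Hmone}\sim h^l|\Kex|_{H^l}$ and thereby lose one order. Equally essential is that we start from the \emph{improved} bound of Theorem~\ref{thm:improved_rate_Hm1_pure} (which carries no spurious $h^{-1}\|\gappr-\gex\|_{\Linf}$ term, unlike the suboptimal estimate in \eqref{eq:error_lift_curv1}), since otherwise the $h^{-r}$ scaling would degrade the rate. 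Finally, the borderline case $r=k+1$ (possible only when $l=k+1$) is handled by observing that $\|P\|_{H^{k+1}_h}=\|P\|_{H^k_h}\lesssim h^{-k}\|P\|_{\Ltwo}\le h^{-(k+1)}\|P\|_{\Ltwo}$, as $P$ is piecewise of degree $k$.
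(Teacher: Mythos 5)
Your proposal is correct and takes essentially the same route as the paper, which proves this corollary by the standard bootstrap it cites from Gawlik and from the densitized-curvature corollary: triangle inequality with a projection/interpolant, inverse estimates to climb from the improved $\Hmone$ bound of Theorem~\ref{thm:improved_rate_Hm1_pure} to $\Ltwo$ and then to $H^r_h$, with the orthogonality (Aubin--Nitsche) gain $\|\Kex-\LtwoProj[k]\Kex\|_{\Hmone}\lesssim h\|\Kex-\LtwoProj[k]\Kex\|_{\Ltwo}$ absorbing the factor $h^{-1}$. The one point to state carefully is that the orthogonality \eqref{eq:proj_l2_orth} is with respect to the $\gex$-weighted inner product, so in the duality step one should test with $\phi/\sqrt{\det\gex}-\LtwoProj[k]\bigl(\phi/\sqrt{\det\gex}\bigr)$ rather than $\phi-\LtwoProj[k]\phi$; this is harmless since $\gex\in W^{2,\infty}$ and uniformly positive definite, as you indicate by invoking the equivalence of the weighted and unweighted norms.
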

\begin{proof}
	Follows analogously to the proof of \cite[Corrolary 6.6]{GNSW2023} and \cite[p. 1818]{Gaw20} as the error in stronger Sobolev norms is traced back to the $\Hmone$-norm. See also the proof of Corollary~\ref{cor:improved_conv_dens_curvature} below.
\end{proof}

\subsection{Proof of Corollary~\ref{cor:improved_conv_dens_curvature}}
To prove the desired rates for the densitized Gauss curvature we first note that for the $\Ltwo$-norm there directly holds with Lemma~\ref{lem:approx_diff_volforms}, $\|\Gaussh\|_{H^1_h}\lesssim 1$, and Corollary~\ref{cor:improved_conv_curvature} for $0\le l\le k+1$
\begin{align*}
		\|\Gaussh\volformh-\Kex\volformex \|_{\Ltwo} &\lesssim \|\Gaussh(\volformh-\volformex)\|_{\Ltwo}+\|\Gaussh-\Kex\|_{\Ltwo}\\
		&\lesssim h\|\gappr-\gex\|_{\Linf}+\|\gappr-\gex\|_{\Linf[]}+\nrm{\gappr-\gex}_2+\inf\limits_{v_h\in\Vo_h^{k}}\|v_h-\Kex\|_{\Ltwo}+h^{l}|\Kex|_{H^l}\\
		&\lesssim h^{l}\big(\|\gex\|_{W^{l,\infty}}+\|\Kex\|_{H^{l}}\big).
\end{align*}

With the improved $\Ltwo$ error estimate at hand we can prove optimal convergence rates in stronger Sobolev spaces.
\begin{proof}[Proof of Corollary~\ref{cor:improved_conv_dens_curvature}]
	Let $u_h\in \Vo_h^{k}$ be the Scott--Zhang interpolant \cite{SZ1990} of $\Kex\volformex$. Then there holds, analogously to the proof of \cite[p. 1818]{Gaw20},
	\begin{align*}
		|\Gaussh\volformh-\Kex\volformex|_{H^r_h}&\leq |\Gaussh\volformh-u_h|_{H^r_h} + |u_h-\Kex\volformex|_{H^r_h}\\
		&\lesssim h^{-r}\|\Gaussh\volformh-u_h\|_{\Ltwo} + h^{l-r}\|\Kex\volformex\|_{H^{l}}\\
		&\leq h^{-r}\big(\|\Gaussh\volformh-\Kex\volformex\|_{\Ltwo}+\|\Kex\volformex-u_h\|_{\Ltwo}+ h^{l}\|\Kex\volformex\|_{H^{l}}\big)\\
		&\lesssim h^{-r}\big(\|\gappr-\gex\|_{\Linf[]}+\nrm{\gappr-\gex}_2+\inf\limits_{v_h\in\Vo_h^{k}}\|v_h-\Kex\|_{\Ltwo}\\
		&\qquad\qquad+h^{l}\|\Kex\|_{H^{l}}+\inf\limits_{v_h\in\Vo_h^{k}}\|v_h-\Kex\volformex\|_{\Ltwo}+h^{l}\|\Kex\volformex\|_{H^{l}}\big)\\
		&\lesssim h^{l-r}\big(\|\gex\|_{W^{l,\infty}}+ \|\Kex\|_{H^{l}} + \|\Kex\volformex\|_{H^{l}}\big).
	\end{align*}
\end{proof}

\section{Numerical examples}
\label{sec:num_examples}

In this section we confirm, by numerical examples, that the theoretical convergence rates from Theorem~\ref{thm:improved_rate_Hm1}, Corollary~\ref{cor:improved_conv_dens_curvature}, Theorem~\ref{thm:improved_rate_Hm1_pure}, and Corollary~\ref{cor:improved_conv_curvature} are sharp.
All experiments were performed in the open source finite element software NGSolve\footnote{\href{www.ngsolve.org}{www.ngsolve.org}} \cite{Sch97,Sch14}, where the Regge elements are available.

We consider the numerical example proposed in \cite{Gaw20}, where on the square $\Omega=(-1,1)\times (-1,1)$ the smooth Riemannian metric tensor
\begin{align*}
	\gex(x,y)= \mat{1+(\pder{f}{x})^2 & \pder{f}{x}\pder{f}{y} \\ \pder{f}{x}\pder{f}{y} & 1+ (\pder{f}{y})^2}
\end{align*}
with $f(x,y)= \frac{1}{2}(x^2+y^2)-\frac{1}{12}(x^4+y^4)$ is defined. This metric corresponds to the surface induced by the embedding $\big(x,y\big)\mapsto \big(x,y,f(x,y)\big)$ and its exact Gauss curvature is given by
\begin{align*}
	\Kex(x,y) = \frac{81(1-x^2)(1-y^2)}{(9+x^2(x^2-3)^2+y^2(y^2-3)^2)^2}.
\end{align*}

To test also the case of non-homogeneous Dirichlet and Neumann boundary conditions we follow \cite{GNSW2023} and consider only one quarter $\Omega=(0,1)\times(0,1)$ and define the right and bottom boundaries as Dirichlet and the remaining parts as Neumann boundary.
We start with a structured mesh consisting of $2^{2l+1}$ triangles with maximal mesh-size $h=\max_T h_T=\sqrt{2}\,2^{-l}$ (and minimal edge length $2^{-l}$) for $l=0,1,\dots$. To avoid possible super-convergence properties due to a structured grid, we perturb all internal points of the triangular mesh by a uniform distribution in the range $[-\frac{h}{2^{2.5}},\frac{h}{2^{2.5}}]$. The geodesic curvature on the left boundary is exactly zero, whereas on the top boundary we have
\begin{align*}
	\dhat{\GeodCurv}|_{\Gamma_{\mathrm{left}}} = 0,\qquad\dhat{\GeodCurv}|_{\Gamma_{\mathrm{top}}} = \frac{-27(x^2 - 1)y(y^2 - 3)}{(x^2(x^2 - 3)^2 + 9)^{3/2}\sqrt{x^2(x^2 - 3)^2 + y^2(y^2 - 3)^2 + 9}}.
\end{align*}
The vertex expressions $\dhat{\sphericalangle}_V^N$ at the vertices of the Neumann boundary can directly be computed by measuring the angle $\arccos( \gex(\gt_V^1, \gt_V^2) )$.

To illustrate our theorems, we must use
$\gappr=\RegInt[k]\gex$.  In implementing the Regge interpolant, the
moments on the edges have to coincide exactly: see \eqref{eq:RegInt}.
To this end,  we use a high enough integration rule for interpolating $\gex$ for  minimizing the numerical integration errors.

We compute and report the curvature error in the  $\Ltwo$-norm, namely  $\|\Kex-\Gaussh\|_{\Ltwo[]}$ and $\|\Kex\volformex-\Gaussh\volformh\|_{\Ltwo[]}$. We also report the  $\Hmone$-norm of the errors. They can be computed by solving e.g. for  $w\in\Honez[\Omega]$ such that 
$-\Delta w= \Kex-\Gaussh$ and observing that 
\begin{align*}
	\|\Kex-\Gaussh\|_{\Hmone[]} = \|w\|_{\Hone[]}.
\end{align*}
Of course the right-hand side can generally be computed only
approximately.  To avoid extraneous errors, we approximate $w$ using
Lagrange finite elements of two degrees more, i.e.,
$w_h\in \VV_h^{k+2}$ when  $\Gauss_h\in \VV_h^{k}$.

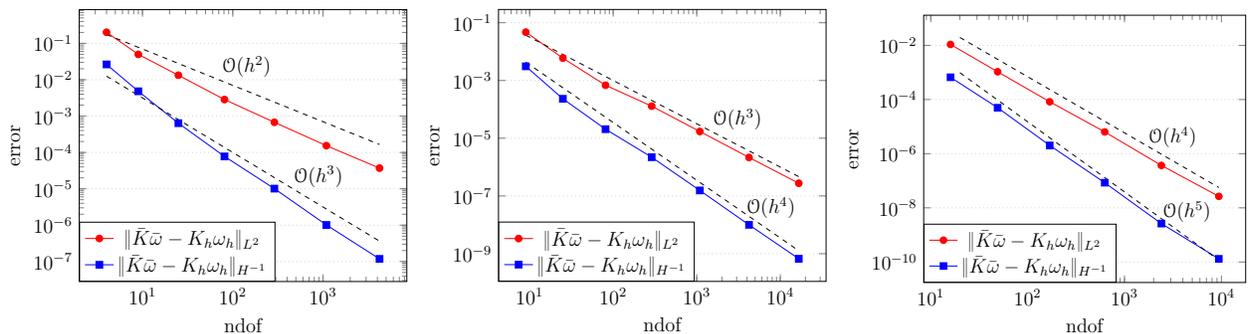
\begin{figure}[!htbp]
	\centering
	\resizebox{0.32\linewidth}{!}{
		\begin{tikzpicture}
			\begin{loglogaxis}[
				legend style={at={(0,0)}, anchor=south west},
				xlabel={ndof},
				ylabel={error},
				ymajorgrids=true,
				grid style=dotted,
				]
				\addlegendentry{$\|\Kex\volformex-\Gaussh\volformh\|_{L^2}$}
				\addplot[color=red, mark=*] coordinates {
					( 4 , 0.20198870270538344 )
					( 9 , 0.04977997751334723 )
					( 25 , 0.013306687975696233 )
					( 81 , 0.0028526823906146726 )
					( 289 , 0.0006760255845594408 )
					( 1089 , 0.0001536597017701977 )
					( 4225 , 3.707447978638317e-05 )
				};
				\addlegendentry{$\|\Kex\volformex-\Gaussh\volformh\|_{H^{-1}}$}
				\addplot[color=blue, mark=square*] coordinates {
					( 4 , 0.026384458097065036 )
					( 9 , 0.004808882524132786 )
					( 25 , 0.0006384651260557073 )
					( 81 , 7.775183865671265e-05 )
					( 289 , 1.012912536754616e-05 )
					( 1089 , 1.014672085767427e-06 )
					( 4225 , 1.1837048340964129e-07 )
				};

				\def\scal{0.7}
				\addplot[dashed, color=black] coordinates {
					( 4 , \scal*0.25 )
					( 4225 , \scal*0.00023668639053254438 )
				};
				
				\def\scal{0.1}
				\addplot[dashed, color=black] coordinates {
					( 4 , \scal*0.125 )
					( 4225 , \scal*3.6413290851160674e-06 )
				};
				
			\end{loglogaxis}
			\node (A) at (3.5, 4.5) [] {$\mathcal{O}(h^2)$};
			\node (B) at (5, 2.2) [] {$\mathcal{O}(h^3)$};
	\end{tikzpicture}}
	\resizebox{0.32\linewidth}{!}{
		\begin{tikzpicture}
			\begin{loglogaxis}[
				legend style={at={(0,0)}, anchor=south west},
				xlabel={ndof},
				ylabel={error},
				ymajorgrids=true,
				grid style=dotted,
				]
				\addlegendentry{$\|\Kex\volformex-\Gaussh\volformh\|_{L^2}$}
				\addplot[color=red, mark=*] coordinates {
					( 9 , 0.04722020277962978 )
					( 25 , 0.005956532768753863 )
					( 81 , 0.0006843689750579143 )
					( 289 , 0.00012902780878191577 )
					( 1089 , 1.7126847362316526e-05 )
					( 4225 , 2.1549317894270887e-06 )
					( 16641 , 2.756122089034252e-07 )
				};
				\addlegendentry{$\|\Kex\volformex-\Gaussh\volformh\|_{H^{-1}}$}
				\addplot[color=blue, mark=square*] coordinates {
					( 9 , 0.0030949689764758663 )
					( 25 , 0.00023225190833558144 )
					( 81 , 2.0550393928576182e-05 )
					( 289 , 2.196605571049263e-06 )
					( 1089 , 1.553434206326641e-07 )
					( 4225 , 9.981131402102656e-09 )
					( 16641 , 6.689738708610919e-10 )
				};

				\def\scal{1}
				\addplot[dashed, color=black] coordinates {
					( 9 , \scal*0.037037037037037035 )
					( 16641 , \scal*4.6583366291064987e-07 )
				};
				
				\def\scal{0.35}
				\addplot[dashed, color=black] coordinates {
					( 9 , \scal*0.012345679012345678 )
					( 16641 , \scal*3.6111136659740296e-09 )
				};
				
			\end{loglogaxis}
			\node (A) at (5, 3.4) [] {$\mathcal{O}(h^3)$};
			\node (A) at (5.7, 1.55) [] {$\mathcal{O}(h^4)$};
	\end{tikzpicture}}
	\resizebox{0.32\linewidth}{!}{
		\begin{tikzpicture}
			\begin{loglogaxis}[
				legend style={at={(0,0)}, anchor=south west},
				xlabel={ndof},
				ylabel={error},
				ymajorgrids=true,
				grid style=dotted,
				]
				\addlegendentry{$\|\Kex\volformex-\Gaussh\volformh\|_{L^2}$}
				\addplot[color=red, mark=*] coordinates {
					( 16 , 0.010821563111129457 )
					( 49 , 0.0010616329023298374 )
					( 169 , 8.286067944941594e-05 )
					( 625 , 6.380074488365906e-06 )
					( 2401 , 3.688038608568743e-07 )
					( 9409 , 2.6575250665613493e-08 )
				};
				\addlegendentry{$\|\Kex\volformex-\Gaussh\volformh\|_{H^{-1}}$}
				\addplot[color=blue, mark=square*] coordinates {
					( 16 , 0.0006633044741982852 )
					( 49 , 4.9936505896321055e-05 )
					( 169 , 2.0215639040713894e-06 )
					( 625 , 8.483689913933702e-08 )
					( 2401 , 2.623851025279834e-09 )
					( 9409 , 1.3130625631342235e-10 )
				};

				\def\scal{5}
				\addplot[dashed, color=black] coordinates {
					( 20 , \scal*0.00390625 )
					( 9409 , \scal*1.1295697747731624e-08 )
				};
				
				\def\scal{1}
				\addplot[dashed, color=black] coordinates {
					( 20 , \scal*0.0009765625 )
					( 9409 , \scal*1.164504922446559e-10 )
				};
			\end{loglogaxis}
			\node (A) at (5.3, 3.1) [] {$\mathcal{O}(h^4)$};
			\node (A) at (5.6, 1.5) [] {$\mathcal{O}(h^5)$};
	\end{tikzpicture}}
	
	\caption{Convergence of lifted densitized Gauss curvature with respect to number of degrees of freedom (ndof) in different norms for Regge elements $\gappr\in\Regge_h^k$ of order $k=1,2,3$.}
	\label{fig:conv_plot_dens_curvature}
\end{figure}

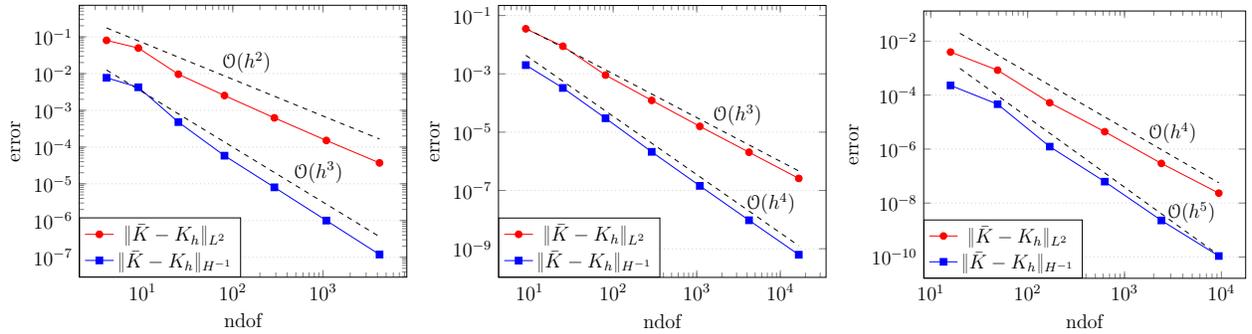
\begin{figure}[!htbp]
	\centering
	\resizebox{0.32\linewidth}{!}{
		\begin{tikzpicture}
			\begin{loglogaxis}[
				legend style={at={(0,0)}, anchor=south west},
				xlabel={ndof},
				ylabel={error},
				ymajorgrids=true,
				grid style=dotted,
				]
				\addlegendentry{$\|\Kex-\Gaussh\|_{L^2}$}
				\addplot[color=red, mark=*] coordinates {
					( 4 , 0.08020568758129472 )
					( 9 , 0.049675897242784116 )
					( 25 , 0.009601106770691379 )
					( 81 , 0.0025274171385003667 )
					( 289 , 0.0006243043743432753 )
					( 1089 , 0.00015103722826237745 )
					( 4225 , 3.707811340303871e-05 )
				};
				\addlegendentry{$\|\Kex-\Gaussh\|_{H^{-1}}$}
				\addplot[color=blue, mark=square*] coordinates {
					( 4 , 0.007702629636405941 )
					( 9 , 0.004239102948909943 )
					( 25 , 0.0004784254037894367 )
					( 81 , 5.819100803988829e-05 )
					( 289 , 8.006173608579616e-06 )
					( 1089 , 9.979945983178577e-07 )
					( 4225 , 1.1865326680477532e-07 )
				};

				\def\scal{0.7}
				\addplot[dashed, color=black] coordinates {
					( 4 , \scal*0.25 )
					( 4225 , \scal*0.00023668639053254438 )
				};
			
				\def\scal{0.1}
				\addplot[dashed, color=black] coordinates {
					( 4 , \scal*0.125 )
					( 4225 , \scal*3.6413290851160674e-06 )
				};
				
			\end{loglogaxis}
			\node (A) at (3.5, 4.5) [] {$\mathcal{O}(h^2)$};
			\node (B) at (5, 2.2) [] {$\mathcal{O}(h^3)$};
	\end{tikzpicture}}
	\resizebox{0.32\linewidth}{!}{
		\begin{tikzpicture}
			\begin{loglogaxis}[
				legend style={at={(0,0)}, anchor=south west},
				xlabel={ndof},
				ylabel={error},
				ymajorgrids=true,
				grid style=dotted,
				]
				\addlegendentry{$\|\Kex-\Gaussh\|_{L^2}$}
				\addplot[color=red, mark=*] coordinates {
					( 9 , 0.0350453867551159 )
					( 25 , 0.008842988372418061 )
					( 81 , 0.0009055260151918672 )
					( 289 , 0.00012281884773299965 )
					( 1089 , 1.5762402968190808e-05 )
					( 4225 , 2.044276052601903e-06 )
					( 16641 , 2.587697120268734e-07 )
				};
				\addlegendentry{$\|\Kex-\Gaussh\|_{H^{-1}}$}
				\addplot[color=blue, mark=square*] coordinates {

					( 9 , 0.0019943950498126297 )
					( 25 , 0.0003280070724256844 )
					( 81 , 2.9973438339446926e-05 )
					( 289 , 2.109293368535958e-06 )
					( 1089 , 1.435925157394553e-07 )
					( 4225 , 9.483241913642543e-09 )
					( 16641 , 6.24812635164102e-10 )
				};

				\def\scal{1}
				\addplot[dashed, color=black] coordinates {
					( 9 , \scal*0.037037037037037035 )
					( 16641 , \scal*4.6583366291064987e-07 )
				};
				
				\def\scal{0.35}
				\addplot[dashed, color=black] coordinates {
					( 9 , \scal*0.012345679012345678 )
					( 16641 , \scal*3.6111136659740296e-09 )
				};
				
			\end{loglogaxis}
			\node (A) at (5, 3.4) [] {$\mathcal{O}(h^3)$};
			\node (A) at (5.7, 1.55) [] {$\mathcal{O}(h^4)$};
	\end{tikzpicture}}
	\resizebox{0.32\linewidth}{!}{
		\begin{tikzpicture}
			\begin{loglogaxis}[
				legend style={at={(0,0)}, anchor=south west},
				xlabel={ndof},
				ylabel={error},
				ymajorgrids=true,
				grid style=dotted,
				]
				\addlegendentry{$\|\Kex-\Gaussh\|_{L^2}$}
				\addplot[color=red, mark=*] coordinates {
					( 16 , 0.003950950394616541 )
					( 49 , 0.0008375113652966704 )
					( 169 , 5.218871554774373e-05 )
					( 625 , 4.404615769679878e-06 )
					( 2401 , 2.942620281225721e-07 )
					( 9409 , 2.3350971439171375e-08 )
				};
				\addlegendentry{$\|\Kex-\Gaussh\|_{H^{-1}}$}
				\addplot[color=blue, mark=square*] coordinates {
					( 16 , 0.00022845229558764236 )
					( 49 , 4.602078699200278e-05 )
					( 169 , 1.2348436288235488e-06 )
					( 625 , 6.210583258899291e-08 )
					( 2401 , 2.2660870707671694e-09 )
					( 9409 , 1.0909461187383017e-10 )
				};

				\def\scal{5}
				\addplot[dashed, color=black] coordinates {
					( 20 , \scal*0.00390625 )
					( 9409 , \scal*1.1295697747731624e-08 )
				};
				
				\def\scal{1}
				\addplot[dashed, color=black] coordinates {
					( 20 , \scal*0.0009765625 )
					( 9409 , \scal*1.164504922446559e-10 )
				};
			\end{loglogaxis}
			\node (A) at (5.3, 3.1) [] {$\mathcal{O}(h^4)$};
			\node (A) at (5.7, 1.4) [] {$\mathcal{O}(h^5)$};
	\end{tikzpicture}}
	
	\caption{Convergence of pure lifted Gauss curvature with respect to number of degrees of freedom (ndof) in different norms for Regge elements $\gappr\in\Regge_h^k$ of order $k=1,2,3$.}
	\label{fig:conv_plot_curvature}
\end{figure}
We start by approximating $\gex$ with linear Regge elements $\gappr\in\Regge_h^1$. As shown in Figure~\ref{fig:conv_plot_dens_curvature} (left), we obtain the stated quadratic convergence in the $\Ltwo$-norm and cubic rate in the weaker $\Hmone$-norm, in agreement with Theorem~\ref{thm:improved_rate_Hm1}. When increasing the approximation order of Regge elements to quadratic and cubic polynomials we observe the appropriate increase of the convergence rates: see  Figure~\ref{fig:conv_plot_dens_curvature} (middle and right), confirming that the results stated in Theorem~\ref{thm:improved_rate_Hm1} and Corollary~\ref{cor:improved_conv_dens_curvature} are sharp. For the error of the pure Gauss curvature we practically obtain the same behavior as stated by Theorem~\ref{thm:improved_rate_Hm1_pure} and Corollary~\ref{cor:improved_conv_curvature}, cf. Figure~\ref{fig:conv_plot_curvature}. Only in the pre-asymptotic regime the error is smaller compared to the densitized Gauss curvature.

We conclude with a few additional remarks on the lifting degree.	Attempting to  increase the degree for the curvature approximation, say by placing  $\Gaussh$ in $\VV_h^{k+1}$ or $\VV_h^{k+2}$, while the metric $\gappr$ remains in $ \Regge_h^k$, need not generally produce additional orders of convergence.
	This is because the orthogonality properties of the canonical Regge interpolant, namely~\eqref{eq:RefInt_edge}--\eqref{eq:RefInt_trig}, may not be fulfilled in such cases. Indeed, we numerically observed loss of two orders of convergence when $\Gaussh$ is placed in $\VV_h^{k+2}$ instead of $\VV_h^k$. In \cite{GNSW2023}, where we used  $\Gaussh\in \VV_h^{k+1},$ one order less is obtained, again due to the orthogonality properties of the canonical Regge interpolant.

	Finally, when reducing  the polynomial degree of the curvature approximation from $\VV_h^k$ to $\VV_h^{k-1}$, $k\ge 2$, while keeping the metric in $\Regge_h^k$, we observed that the convergence rates reduce by one order. 
        Note that the orthogonality properties of the canonical Regge interpolant are still fulfilled. Nevertheless, the overall approximation ability of the space  is reduced so that the convergence rate in the $\Hmone$-norm decreases from $\mathcal{O}(h^{k+2})$ to $\mathcal{O}(h^{k+1})$.

\section*{Acknowledgments}
This work was supported in part by the Austrian Science Fund (FWF) project \href{https://doi.org/10.55776/F65}{10.55776/F65}
and the National Science Foundation (USA) Grant DMS-2409900.

\bibliographystyle{acm}
\bibliography{cites}

\end{document}